\def\mm{\mathfrak{m}}
\def\Phi{\mathfrak{n}}
\def\Phi{\mathfrak{N}}
\def\MM{{\mathcal M}}
\def\Gc{{\mathcal G}}
\def\opn#1#2{\def#1{\operatorname{#2}}} % to make operators
\opn\rank{rank}
\DeclareMathOperator{\spn}{span}
\opn\height{height}
\opn\ini{in}
\opn\lex{lex}
\opn\supp{supp}
\opn\Tor{Tor}
\def\rk{{\rm rk}}
\opn\V{\mathcal{V}}
\let\union=\cup
\let\iso=\cong
\opn\adj{adj}  % adjacent hypergraph, ideal, etc.
\newtheorem{Theorem}{Theorem}[section]
\newtheorem{Lemma}[Theorem]{Lemma}
\newtheorem{Corollary}[Theorem]{Corollary}
\newtheorem{Proposition}[Theorem]{Proposition}
\theoremstyle{remark}
\newtheorem{Remark}[Theorem]{Remark}
\newtheorem{Example}[Theorem]{Example}
\theoremstyle{definition}
\newtheorem{Definition}[Theorem]{Definition}
\let\epsilon\varepsilon
\let\kappa=\varkappa
\numberwithin{equation}{section}
\tikzstyle{Cgray}=[draw=black, scale = .75,circle, fill = white, minimum size=5mm, inner sep = 2pt] 
\tikzstyle{Cgray2}=[draw=black, scale = .60,circle, fill = white, minimum size=5mm, inner sep = 2pt] 
\tikzstyle{Cwhite}=[scale = .8,circle, fill = white, minimum size=8mm] 
\tikzstyle{Cblack}=[scale = .7,circle, fill = black, minimum size=3mm]
\tikzstyle{C0}=[scale = .9,circle, fill = black!0, inner sep = 0pt, minimum size=3mm]
\tikzstyle{C1}=[scale = .7,circle, fill = black!0, inner sep = 0pt, minimum size=3mm]
\tikzstyle{clique}=[fill=green!20!white]
\tikzstyle{clique1}=[fill=green!30!white,semitransparent]
\tikzstyle{clique2}=[fill=green!60!white,semitransparent]
\tikzstyle{clique3}=[fill=blue!20!white,semitransparent]
\tikzstyle{clique4}=[fill=blue!40!white,semitransparent]
\tikzstyle{clique5}=[fill=red!40!white,semitransparent]
\newcommand{\red}[1]{{\color{red} #1}}
\begin{document}

\title {Prime splittings of determinantal ideals}

\author {Fatemeh Mohammadi$^{1}$ and Johannes Rauh$^{2}$}

%\address{Fatemeh Mohammadi, Philipps-Universit\"at Marburg, Fachbereich Mathematik und Informatik
%Hans-Meerwein-Strasse
%35032 Marburg, Germany.}
\address{$^{1}$School of Mathematics, University of Bristol, Bristol, BS8 1TW, United Kingdom}
\email{fatemeh.mohammadi@bristol.ac.uk}
\address{$^{2}$% Institut f\"ur Algebraische Geometrie,
%  Universit\"at Hannover, Welfengarten 1, 30167 Hannover, Germany
MPI for Mathematics in the Sciences, Inselstra\textup{\ss}e 21, 04103 Leipzig, Germany}
\email{jrauh@mis.mpg.de}

\begin{abstract}
We consider determinantal ideals, where the generating minors are encoded in a hypergraph.
We study when the generating minors form a Gr\"obner basis.  In this case, the ideal is radical, and we can
describe algebraic and numerical invariants of these ideals in terms of combinatorial data of their hypergraphs, such as
the clique decomposition.  In particular, we can construct a minimal free resolution as a tensor
product of the minimal free resolution of their cliques.
For several classes of hypergraphs we find a combinatorial description of the minimal primes
in terms of a prime splitting.  That is, we write the determinantal ideal as a sum of smaller determinantal ideals such
that each minimal prime is a sum of minimal primes of the summands.
\end{abstract}

\subjclass{13C40, 13H10, 13P10,  05E40}
\keywords{Determinantal ideals, Gr\"obner bases,  primary decomposition, minimal free resolution}
\maketitle

\tableofcontents

\section{Introduction}

Let $K$ be a field.
For fixed integers $m,n$ with $2<m\leq n$, let $X=(x_{ij})$ be a generic $m\times n$-matrix, and let $R=K[X]$ be the polynomial ring over $K$ in
indeterminates $x_{ij}$.  Our objects of study are determinantal ideals (that is, ideals $J$ generated
by sets of minors of~$X$), and our goal is to decompose $J$ into a sum of ideals, $J=J_{1}+J_{2}+\cdots+J_{r}$, such that
each $J_{i}$ is itself a determinantal ideal and such that the components $J_{i}$ are easier to understand algebraically and combinatorially.

Let $\Delta$ be a \emph{hypergraph} with vertex set~$[n]$; that is, $\Delta$ is a family of subsets of~$[n]$.  The
elements of $\Delta$ are the \emph{hyperedges} of $\Delta$.  The \emph{dimension} of a hyperedge
$T=\{b_{1},\dots,b_{k}\}\in\Delta$ is $\dim(T)=|T|-1$, and the dimension of $\Delta$ is
$\dim(\Delta)=\max_{T\in\Delta}\dim(T)$.
Given $\Delta$ and a subset $S\subseteq[m]$, we define the determinantal ideal
\begin{equation*}
  J_{\Delta}^{S}=([a_1\ldots a_k| b_1\ldots b_k]\;:\; \{b_1,\ldots,b_k\}\in \Delta \text{ and } \{a_1,\ldots,a_k\}\subset S\}),
\end{equation*}
where $[a_1\ldots a_k| b_1\ldots b_k]$ is the $k$-minor of the submatrix of $X$ with row indices $a_1,\ldots,a_k$ and
column indices $b_1,\ldots,b_k$.  We mostly focus on the case where~$S=[m]$, in which case we omit the superscript~$S$.
We call $J_{\Delta}^{S}$ a \emph{determinantal hypergraph ideal}\footnote{Other
  authors (e.g.~\cite{EHHM}) have preferred to work with the set of facets of a simplicial complex instead of a
  hypergraph.  However, observe that the set of determinants contained in a given ideal usually is not a simplicial
  complex.  In most of the ideals that we want to study, more or less the opposite is true: The set of determinants that
  are \emph{not} contained in the ideal define a simplicial complex.  This should be compared to the definition of the
  monomial Stanley-Reisner ideal associated with a simplicial complex, where the simplicial complex is also constructed
  from the set of monomials \emph{not} contained in the ideal.}.
Clearly, any determinantal ideal $J$ can be written as a sum of determinantal hypergraph ideals, and usually this
decomposition is not unique.  We want to find decompositions that allow to infer algebraic properties of $J$ from
algebraic properties of the summands.

In general, if an ideal $I$ is written as a sum of ideals $I_{1},\ldots,I_{r}$, it is not possible to directly extract
the algebraic invariants of $I$ (e.g. Hilbert function, primary components, minimal free resolution, etc.) from the
algebraic invariants of its subideals $I_i$.  Indeed, any ideal can be written as a sum of principal ideals.
However, in Sections~\ref{sec:block-adjacent-hyps} and~\ref{sec:further-examples}, we find ideals $I$ for which a nice
decomposition into smaller ideals $I_1,\ldots,I_r$ exists such that the minimal primes of $I$ can be determined from the
minimal primes of the ideals $I_i$ in the following sense:
\begin{Definition}
Let $I,I_1,\ldots,I_r$ be ideals such that $I=I_1+\cdots+I_r$.  Then $I_1+\cdots+I_r$ is a \emph{prime splitting} of $I$ if the following condition holds:
\begin{itemize}
\item[(*)] $P$ is a minimal prime ideal of $I$ if and only if there exist minimal prime ideals $P_{i}$ of $I_i$ such that $P=P_{1}+\cdots+P_{r}$.
\end{itemize}
\end{Definition}
Trivial examples of prime splittings occur if $K$ is algebraically closed and if the ideals $I_{1},\dots,I_{r}$ are defined in disjoint sets of variables.
Later, we will find more interesting examples of prime splittings that have the following weaker property: There exists
a term order $<$ such that the initial ideals $\ini_{<}I_{1},\dots,\ini_{<}I_{r}$ are defined in disjoint sets of variables.
This condition is not sufficient for a prime splitting, though, as the following example shows.
\begin{Example}
  \label{ex:disj-in-id-not-suff}
  The ideals $I_{1} = ( x^{2}-z )$ and $I_{2} = (y^{2} - z)$ are both prime, and their initial ideals
  with respect to the lexicographic order are generated in disjoint sets of variables.  However, the ideal $I =
  I_{1}+I_{2}$ has a non-trivial primary decomposition $I = (x + y, y^{2}-z) \cap (x - y, y^{2}-z)$.\qed
\end{Example}

% Later, % in Section~\ref{sec:union-subs-block-adjacent},
% for the ideals associated with unions of block adjacent hypergraphs, we will see that a prime splitting
% occurs if neighbouring blocks do not overlap too much, in some sense.

% \bigskip

To put our work in perspective, let us mention some prior results on determinantal ideals.
The classical ideal $I_{(k)}$ generated by all $k$-minors of $X$ is equal to $J_{\Delta_{(k)}}$, where $\Delta_{(k)}$ consists of all subsets of $[n]$ of cardinality~$k$. % is the full $(m-1)$-skeleton of the simplex on $[n]$.
The ideal $I_{(k)}$ has been extensively studied from the viewpoint of algebraic geometry and commutative algebra.  The variety $\mathcal{V}(I_{(k)})$ consists of all $K$-linear maps from $K^m$ to $K^n$ of rank less than~$k$. 
%It is well known that the coordinate ring $R/I_{(k)}$ is a normal Cohen--Macaulay domain, see~\cite{BC,Ho}.
Sturmfels~\cite{St90} and Caniglia et al.~\cite{CGG} showed that the set of all $k$-minors of $X$ forms a Gr\"obner basis of~$I_{(k)}$  (with respect to both diagonal and lexicographic orders). Moreover, in the case that  $k=\min\{m,n\}$ the generators of $I_{(k)}$ form a universal Gr\"obner basis (i.e., a
  Gr\"obner basis with respect to every term order on $K[X]$). Their technique provided a new proof of the Cohen--Macaulayness of $R/I_{(k)}$ in this case and has been used to compute numerical invariants of these rings, like the multiplicity and the Hilbert function, see~\cite{BH1,CH,HT}. Some excellent references on the theory of determinantal ideals are the book~\cite{BV} of Bruns and Vetter, and the paper~\cite{BC} of Bruns and Conca.

In general, for any $T\subseteq[n]$, we call $\Delta_{(k),T}:=\{ T'\subseteq T : |T'|=k\}$ the \emph{$k$-clique}
on~$T$.  Since the ideals $J_{\Delta_{(k),T}}$ are the easiest determinantal ideals, our strategy is to study
arbitrary hypergraphs in terms of their \emph{clique decompositions} (Section~\ref{sec:cliq-decomp}).

Several classes of determinantal ideals that have been studied in the literature turn out to be determinantal hypergraph ideals. For example, Herzog et al. in~\cite{HHH} and the second author in~\cite{Rauh13} studied the determinantal ideal $J_\Delta$ when $\Delta$ is a simple graph.
They showed that many of the algebraic properties of these ideals can be translated into combinatorial
properties of their corresponding graphs. These ideals arise naturally in the study of conditional independence statements, see~\cite{HH,HHH,FatemehLeila}.
It turns out that more general determinantal ideals also play a role for conditional independence statements in the presence of hidden variables~\cite{MR}.

Ho\c{s}ten
and Sullivant in~\cite{HSS} considered the ideal generated by maximal adjacent minors of a generic matrix. This ideal is the determinantal ideal of the \emph{adjacent hypergraph}~$\Delta_{(m)}^{\adj}$, i.e., the
hypergraph on $[n]$ with all hyperedges of the form $\{a,a+1\ldots,a+m-1\}$ for $1\leq a\leq n-m+1$.
The minimal primes of these ideals are again determinantal hypergraph ideals for suitable hypergraphs. The determinantal ideals of \emph{pure} hypergraphs (i.e.~hypergraphs where all hyperedges have the same cardinality) were studied in~\cite{EHHM}.

Motivated by geometrical considerations, the more general class of ladder determinantal ideals has been considered by Conca, Gonciulea, and Miller, see~\cite{A,mixed}. These ideals can be decomposed in a nice way as $J=J_{\Delta_1,S_1}+J_{\Delta_2,S_2}+\cdots+J_{\Delta_r,S_r}$ such that $J_{\Delta_i,S_i}$ are all classical determinantal ideals with
$S_r\subset \cdots\subset S_2\subset S_1$.
This decomposition helps to show how these ideals share many properties with the classical determinantal ideals, see~\cite{A,CH}.
In our paper, we find similar decompositions.

\medskip

\noindent{\bf Outline and our results.}
% The paper is structured as follows.
In Section~\ref{sec:det-hyp-ideals} we define when a hypergraph $\Delta$ is \emph{closed} and show that the generators
of the corresponding ideal $J_\Delta$ form a Gr\"obner basis with respect to the lexicographic order.  We compute the
numerical invariants of these ideals in Theorem~\ref{codim}.  Our
results %together with a result from~\cite{BZ, SturmfelsZelevinsky}
imply that the generators of a determinantal ideal of a pure $(m-1)$-dimensional
hypergraph form a universal Gr\"obner basis if and only if the underlying hypergraph
  is a % full skeleton of a simplex.
union of full sekeletons of simplices (that is, the corresponding ideal is a sum of classical determinantal ideals
$I_{(m-1)}$ defined in disjoint sets of variables); see Remark~\ref{rem:universal}.

In Section~\ref{sec:block-adjacent-hyps} we introduce \emph{block adjacent hypergraphs} and \emph{unions of subsequent
  block adjacent hypergraphs}.  Such hypergraphs can be written as a union $\Delta=\Delta_{1}\cup\dots\cup\Delta_{r}$,
where each $\Delta_i=\Delta_{(m),\{u_{i},u_{i}+1,\dots,v_{i}\}}$ is an $m$-clique on a set % $\{u_i,u_i+1,\ldots,v_i\}$
of consecutive vertices with~$u_1<u_2<\cdots<u_r$, $v_{1}<v_{2}<\cdots<v_{r}$ and $v_{i}-u_{i+1}<m$.  Their
determinantal ideals are generalizations of the ideal of maximal adjacent minors studied by Ho\c{s}ten and Sullivant
in~\cite{HSS}.  We give a combinatorial description of their minimal primes.  When $v_{i}-u_{i}=m-1$, then $\Delta$
contains~$\Delta^{\adj}_{(m)}$, and we call $\Delta$ \emph{block adjacent}.  In this case, we identify the set of
associated primes of $J_{\Delta}$ as a subset of the associated primes of~$J_{\smash[t]{\Delta_{(m)}^{\adj}}}$, see
Theorem~\ref{prime adjacent}.  On the other hand, when $v_{i}-u_{i}<m-1$, we call $\Delta$ a \emph{union of subsequent
  block adjacent hypergraphs}, and we show that a prime splitting occurs, see Theorem~\ref{thm:block-adj-prime-split}.

Our proofs are based on finding variables and minors that are regular (non-zero-divisor) modulo $J_{\Delta}$ and localizing with
respect to these variables and minors.  The main technical result is the prime splitting lemma (Lemma~\ref{lem:prime-splitting})
and the localization lemma (Lemma~\ref{rules}).  Further applications of the prime splitting lemma are studied in
Section~\ref{sec:further-examples}.  % In Theorem~\ref{thm:lemma-as-thm},
We formulate what can be said using our methods about unions $\Delta=\Delta_{1}\cup\dots\cup\Delta_{r}$ of block
adjacent hypergraphs  or cliques
(Theorem~\ref{thm:lemma-as-thm}) % \ref{thm:prime-union-of-cliques})
under the assumption that the sub-hypergraphs $\Delta_{i}$ have a ``small'' and ``nice'' intersection.
% In Theorem~\ref{thm:prime-union-of-cliques} we use the prime splitting lemma to prove primality under the assumption
% that the cliques in the clique decomposition have a small and nice intersection.
%
% To prove this result, we generalize a lemma from~\cite{EHHM} that allows to find regular elements modulo $J_{\Delta}$.

Section~\ref{sec:res} is devoted to minimal free resolutions of the determinantal ideals associated to closed hypergraphs. The minimal free resolution of the classical  determinantal ideal generated by all maximal minors of $X$
is given by the Eagon-Northcott complex (see e.g.~\cite[A2.6.1]{Eisenbud}). In Theorem~\ref{res} we % use a criterion by Bruns and Conca~\cite{BC} to
construct a minimal free resolution of a  determinantal hypergraph ideal.  The result is that the multigraded Betti numbers of these ideals are equal to the multigraded Betti numbers of their initial ideals with respect to the lexicographic term order, see Corollary~\ref{Betti}. This gives a positive answer to~\cite[Question~1.1]{ConcaHostenThomas}.

\medskip

\noindent{\bf Assumptions on the field.}
Later, we will often argue that a sum of prime ideals defined in disjoint sets of variables is prime.  In general, this
is only true over an algebraically closed field:
\begin{Example}
  \label{ex:disj-not-suff}
  If $K$ does not contain a root of $-1$, then $I_{1} = ( x^{2}+1 )$ and $I_{2} = (y^{2}+1)$ are both prime and defined
  in disjoint sets of variables.  However, the ideal $I = I_{1}+I_{2}$ has a non-trivial primary decomposition $I = (x +
  y, x^{2}+1) \cap (x - y, x^{2}+1)$.\qed
\end{Example}
We will not have this problem, since the ideals that we work with are, in fact, \emph{geometrically prime}, that is,
they remain prime under any algebraic field extension.  The following lemma shows that in our proofs we may pass to the
algebraic closure of $K$:
\begin{Lemma}
  \label{lem:field-lemma}
  Let $K\subseteq L$ be an algebraic field extension.  For any ideal $I\subseteq K[X]$ let $I_{L}\subseteq L[X]$ be the
  ideal generated by~$I$ in~$L[X]$.
  \begin{enumerate}
  \item If $I_{L}$ is prime, then $I$ is prime.
  \item If $f\in K[X]\setminus I$ is regular modulo~$I_{L}$, then $f$ is regular modulo~$I$.
  \end{enumerate}
\end{Lemma}
\begin{proof}
  An ideal $I$ is prime if and only if each $f\in K[X]\setminus I$ is regular modulo~$I$, so statement~(1)
  follows from statement~(2).  Let $g\in K[X]$, and assume that $fg\in I$.  Then $fg\in I_{L}$.  Since $f\notin I_{L}$ and
  since $I_{L}$ is prime, $g\in I_{L}$.  Hence $g\in I$.
\end{proof}

\section{Determinantal hypergraph ideals}
\label{sec:det-hyp-ideals}

\subsection{Clique decomposition \texorpdfstring{of $\Delta$}{}}
\label{sec:cliq-decomp}

Let $\Delta$ be a hypergraph on $[n]$.  A \emph{clique} or a \emph{$k$-clique} of
$\Delta$ is a sub-hypergraph of the form $\Delta_{(k),T}:=\{ T'\subseteq T : |T'|=k\}$ for some~$T\subseteq[n]$.  A
\emph{maximal $k$-clique} of $\Delta$ is a $k$-clique that is not a subset of another $k$-clique of $\Delta$ (i.e., it
is maximal with respect to inclusion).  Every hypergraph $\Delta$ can be written as the union of its maximal cliques
$\Delta_{1},\ldots,\Delta_{r}$.  The decomposition $\Delta=\Delta_{1}\union \Delta_{2}\union\cdots\union \Delta_{r}$ is
called the {\em clique decomposition} of $\Delta$.
Observe that the determinantal ideal of a clique is a classical determinantal ideal which is well-understood from the
algebraic point of view.
\begin{Example}
In examples, in order to simplify our notation we denote the set $F=\{i_1,\ldots,i_t\}$ by $i_1i_2\cdots i_t$,
where $i_1<i_2<\cdots<i_t$.
  Consider the hypergraph on the vertex set $[9]$ with hyperedges
%  $\{1,2,3\}$, $\{1,2,4\}$, $\{1,3,4\}$, $\{2,3,4\}$, $\{3,4,5\}$, $\{5,6,7\}$, $\{7,8\}$, $\{8,9\}$ and $\{7,9\}$
  123, 124, 134, 234, 345, 567, 78, 89 and 79
% (when no confusion arises, a set of indices $F=\{i_1,\ldots,i_t\}$ is described as $i_1i_2\cdots i_t$ by juxtaposing its elements)
  (see Figure~\ref{Fig:clique-decomposition:A}).  Its clique decomposition is
  $\Delta=\Delta_{(3),1234}\cup\Delta_{(3),345}\cup\Delta_{(3),567}\cup\Delta_{(2),789}$.
  % $\Delta=\Delta_1\cup\Delta_2\cup\Delta_3\cup\Delta_4$, where $\Delta_1$ is the $3$-clique on 1234, % $\{1,2,3,4\}$,
  % $\Delta_2$ is the $3$-clique on 345, % $\{3,4,5\}$,
  % $\Delta_3$ is the $3$-clique on 567, % $\{5,6,7\},$
  % and $\Delta_4$ is the $2$-clique on 789. %  $\{7,8,9\}$.
\end{Example}

\renewcommand{\thesubfigure}{\textnormal{(\alph{subfigure})}}

\begin{figure}[h!t]
  \begin{center}
    \subcaptionbox{\label{Fig:clique-decomposition:A}}{\begin{tikzpicture} [scale = .58, very thick = 10mm]
% 3-cliques
      \fill[clique] (-12,-1) rectangle (-10,1);
      \fill[clique] (-10,-1) -- (-10,1) -- (-8,0);
      \fill[clique] (-8,0) -- (-7,2) -- (-6,0);
% nodes
      \node (n1) at (-12,-1)  [Cgray] {$1$};
      \node (n2) at (-12,1) [Cgray] {$2$};
      \node (n3) at (-10,-1)  [Cgray] {$3$};
      \node (n4) at (-10,1)  [Cgray] {$4$};
      \node (n5) at (-8,0)  [Cgray] {$5$};
      \node (n6) at (-7,2)  [Cgray] {$6$};
      \node (n7) at (-6,0)  [Cgray] {$7$};
      \node (n8) at (-5,2)  [Cgray] {$8$};
      \node (n9) at (-4,0)  [Cgray] {$9$};
% 2-cliques
      \foreach \from/\to in {n1/n2,n1/n3,n1/n4,n2/n3,n2/n4,n3/n4, n3/n5,n4/n5, n5/n6,n5/n7,n6/n7, n7/n8,n7/n9,n8/n9}
      \draw[] (\from) -- (\to);
%      \node (a) at (-8,-2) {(a)};
    \end{tikzpicture}}
    $\qquad$
    \subcaptionbox{\label{Fig:clique-decomposition:B}}{\begin{tikzpicture} [scale = .58, very thick = 10mm]
% 3-cliques
      \fill[clique] (-12,-1) rectangle (-10,1);
      \fill[clique] (-10,-1) -- (-10,1) -- (-8,0);
      \fill[clique] (-8,0) -- (-7,2) -- (-6,0);
% nodes
      \node (n1) at (-12,-1)  [Cgray] {$1$};
      \node (n2) at (-12,1) [Cgray] {$9$};
      \node (n3) at (-10,-1)  [Cgray] {$2$};
      \node (n4) at (-10,1)  [Cgray] {$8$};
      \node (n5) at (-8,0)  [Cgray] {$3$};
      \node (n6) at (-7,2)  [Cgray] {$7$};
      \node (n7) at (-6,0)  [Cgray] {$4$};
      \node (n8) at (-5,2)  [Cgray] {$6$};
      \node (n9) at (-4,0)  [Cgray] {$5$};
% 2-cliques
      \foreach \from/\to in {n1/n2,n1/n3,n1/n4,n2/n3,n2/n4,n3/n4, n3/n5,n4/n5, n5/n6,n5/n7,n6/n7, n7/n8,n7/n9,n8/n9}
      \draw[] (\from) -- (\to);
%      \node (b) at (-8,-2) {(b)};
    \end{tikzpicture}}
    \caption{\subref*{Fig:clique-decomposition:A} A hypergraph with one- and two-dimensional hyperedges.  The color indicates which triangles have been
      added.  \subref*{Fig:clique-decomposition:B} A different hypergraph, isomorphic to the first one.}\label{Fig:clique-decomposition}
  \end{center}
\end{figure}
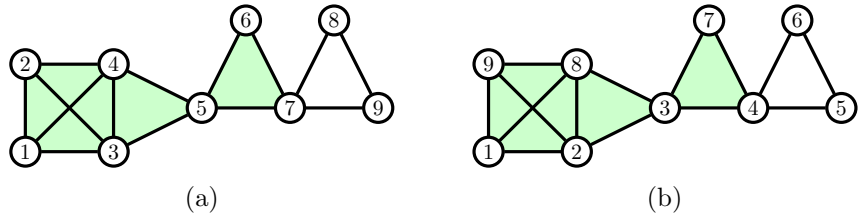

\subsection{Ideals whose generators form a Gr\"obner basis}
Let $<_{\lex}$ be the lexicographic order induced by the natural order of indeterminates
\[x_{11} > x_{12} > \cdots > x_{1n} > x_{21} > \cdots > x_{2n} > \cdots > x_{mn},\]
of matrix row by row from left to right.
Here we study the question when the generators of $J_\Delta$ form a Gr\"obner basis with respect to $<_{\lex}$.

\begin{Definition}
  \label{def:cl}
  Let $\Delta$ be a hypergraph on $[n]$ with clique decomposition $\Delta=\Delta_1\cup\cdots\cup\Delta_r$.  Then
  $\Delta$ is called \emph{closed} if minors belonging to different cliques have relatively prime initial terms (with
  respect to $<_{\lex}$).
  % , and $\Delta$ is called \emph{closed} if there exists a labeling on its vertices such that for this labeling
  % $\Delta$ is closed.
\end{Definition}
Note that closedness depends on the value of~$m$.  A closed hypergraph becomes non-closed when~$m$ is increased.
Moreover, the definition of closedness depends on the ordering (or labeling) of the nodes, as the next example shows.
\begin{Example}
  Let $m=3$.  The hypergraph in Figure~\ref{Fig:clique-decomposition:A} is closed.  For example
  $\ini_<[123|134]=x_{11}x_{23}x_{34}$ and $\ini_<[123|345]=x_{13}x_{24}x_{35}$ are relatively prime; and
  $\ini_<[123|567]=x_{15}x_{26}x_{37}$ and $\ini_<[a_{1}a_{2}|78]=x_{a_17}x_{a_28}$ are relatively prime for all $1\leq
  a_1<a_2\leq 3$.  However, the isomorphic (relabelled) hypergraph in Figure~\ref{Fig:clique-decomposition:B} is not closed, since
  $\ini_<[123|128]=x_{11}x_{22}x_{38}$ and $\ini_<[123|238]=x_{12}x_{23}x_{38}$ are not relatively prime; and
  $\ini_<[123|347]=x_{13}x_{24}x_{37}$ and $\ini_<[23|46]=x_{24}x_{36}$ are not relatively prime either.
  % $\ini_<[12|12]=x_{11}x_{22}$ and $\ini_<[123|134]=x_{11}x_{23}x_{34}$ are not relatively prime, and
  % $\ini_<[23|57]=x_{25}x_{37}$ and $\ini_<[123|456]=x_{14}x_{25}x_{36}$ are not relatively prime either.  In total, the
  % hypergraph is closed.
\end{Example}

% \begin{figure}[ht]
%   \begin{center}
%     \begin{tikzpicture} [scale = .40, very thick = 10mm]

%       \fill[clique] (-5,2) -- (-5,-2) -- (-3,0);
%       \fill[clique] (-3,0) -- (-1,-2) -- (-1,2);

%       \node (n1) at (-7,0) [Cgray] {$1$};
%       \node (n2) at (-5,2)  [Cgray] {$2$};
%       \node (n3) at (-5,-2)  [Cgray] {$3$};
%       \node (n4) at (-3,0)  [Cgray] {$4$};
%       \node (n5) at (-1,-2)  [Cgray] {$6$};
%       \node (n6) at (-1,2)  [Cgray] {$5$};
%       \node (n7) at (1,0)  [Cgray] {$7$};
%       \node (m) at (-3,-3)  [Cwhite] {(a)};

%       \foreach \from/\to in {n2/n1,n3/n2,n3/n4, n4/n2, n4/n5,n4/n6, n5/n7,n6/n5}
%       \draw[] (\from) -- (\to);
% %%%%%%%%%%%%%%%%%%
%       \fill[clique] (10,2) -- (10,-2) -- (12,0);
%       \fill[clique] (12,0) -- (14,-2) -- (14,2);

%       \node (m11) at (8,0) [Cgray] {$2$};
%       \node (m12) at (10,2)  [Cgray] {$1$};
%       \node (m13) at (10,-2)  [Cgray] {$3$};
%       \node (m14) at (12,0)  [Cgray] {$4$};
%       \node (m15) at (14,-2)  [Cgray] {$5$};
%       \node (m16) at (14,2)  [Cgray] {$6$};
%       \node (m17) at (16,0)  [Cgray] {$7$};
%       \node (mm1) at (12,-3)  [Cwhite] {(b)};

%       \foreach \from/\to in {m12/m11,m13/m12,m13/m14, m14/m12, m14/m15,m14/m16, m15/m17,m16/m15}
%       \draw[] (\from) -- (\to);
% %%%%%%%%%%%%%%%%%%
%     \end{tikzpicture}
%   \end{center}
%   \caption{A closed hypergraph with (a) a closed labeling and (b) a non-closed labeling.}\label{Fig1}
% \end{figure}

\begin{Lemma}\label{lem:cl}
  Let $\Delta=\Delta_1\cup\cdots\cup\Delta_r$ be the clique decomposition of $\Delta$. Then $\Delta$ is closed if and
  only if for every pair of hyperedges $\{b_1,b_2,\ldots,b_t\}\in \Delta_i$ ($b_{1}<b_{2}<\dots<b_{t}$) and
  $\{c_1,c_2,\ldots,c_s\}\in \Delta_j$ ($c_{1}<c_{2}<\dots<c_{s}$) with $i\neq j$ and $t\leq s$, and for all $k$ we have
  \begin{equation*}
    b_k\neq c_\ell,\quad\text{where}\quad \max\{1,k-m+s\}\leq \ell\leq m-t+k.
  \end{equation*}
\end{Lemma}

\begin{proof}
For each hyperedge $\{b_1,b_2,\ldots,b_t\}$ of $\Delta$ with $b_{1}<b_{2}<\dots<b_{t}$, and integers $1\leq a_1<\cdots<a_t\leq m$, 
the initial term of $[a_1\ldots a_t| b_1\ldots b_t]$ is $x_{a_1b_1}\cdots x_{a_tb_t}$. Therefore
for each index $b_k$ the variable $x_{i b_k}$ appears in the support of an initial term of a minor $[a_1\ldots a_t| b_1\ldots b_t]$ if and only if $k\leq i\leq m-t+k$. The statement follows by this observation.
\end{proof}

\begin{Example}
  Consider again the hypergraph of Figure~\ref{Fig:clique-decomposition:B}, which is not closed.  For example, vertex 2
  takes the first position in two hyperedges 289 and 238, and vertex 4 takes the first position in the edge 46 and the
  second position in the hyperedge~347.
\end{Example}

\begin{Proposition}\label{closed}
  If $\Delta$ is a closed hypergraph, then the generators of $J_\Delta$ form a $<_{\lex}$-Gr\"obner basis,
  % for $J_\Delta$,
  and $J_\Delta$ is a radical ideal.
\end{Proposition}
\begin{proof}
We show that all $S$-pairs, $S([a_1\ldots a_t|b_1\ldots b_t], [c_1\ldots c_s|d_1\ldots d_s])$ reduce
to zero. Assume that $\{b_1,\ldots,b_t\}\in \Delta_i$ and $\{d_1,\ldots,d_s\}\in \Delta_j$. If $i\neq j$, then $\text{in}_<[a_1\ldots a_t|b_1\ldots b_t]$ and $\text{in}_< [c_1\ldots c_s|d_1\ldots d_s]$ have no common factor, which implies that their $S$-polynomial reduces to zero.
Now assume that $i=j$. Then $s=t$, and all $s$-subsets of $\{b_1,\ldots,b_s\}\cup\{d_1,\ldots,d_s\}$ belong to $\Delta_{i}$. Therefore, since the set of all $s$-minors in $J_{\Delta_{i}}$ forms a Gr\"obner basis of $J_{\Delta_{i}}$, the $S$-pair $S([a_1\ldots a_s|b_1\ldots b_s], [c_1\ldots c_s|d_1\ldots d_s])$ reduces to zero with respect to
the $s$-minors of~$J_{\Delta_{i}}$. Thus the assertion follows from Buchberger's criterion. Since the generators of the initial ideal of $J_\Delta$ are all squarefree, we deduce that $J_\Delta$ is radical.
\end{proof}
\begin{Remark}
\begin{itemize}
\item[(1)] In fact, if $\Delta$ is a closed hypergraph, then the generators of $J_\Delta$
form a minimal Gr\"obner basis for $J_\Delta$ with respect to any {diagonal} term order in which $x_{ij}>x_{ik}$ for all $i$ and for all $j<k$.
Here we always consider the lexicographic term order $<$ induced by $x_{11}>\cdots>x_{1n}>\cdots>x_{mn}$. 
\item[(2)] 
When $\Delta$ is a pure $(m-1)$-dimensional hypergraph, then the converse of the first statement in Proposition~\ref{closed} also holds, as shown in~\cite[Theorem~1.1]{EHHM}: In this case, the generators of $J_{\Delta}$ form a $<_{\lex}$-Gr\"obner basis if and only  if $\Delta$ is closed.  Theorem~\ref{thm:glue-along-cliques} below shows that same is not true more generally: There are non-closed mixed hyper graphs $\Delta$ such that the generators of $J_{\Delta}$ form a Gr\"obner basis.
% \blue{A pure hypergraph (and in particular a graph) is closed if and only if the generators of its associated ideal form a Gr\"obner basis, see \cite[Theorem~1.1]{EHHM}.} \textcolor{red}{However, for non-pure hypergraphs, we only know one direction. }
\end{itemize}
\end{Remark}

A consequence of Proposition~\ref{closed} is the following result which follows by the same argument as in the proof of~\cite[Corollary~1.3]{EHHM}.
\begin{Theorem}
\label{codim}
Let $\Delta$ be a closed hypergraph with clique decomposition $\Delta=\Delta_1\cup\cdots\cup\Delta_r$.
\begin{itemize}
\item [(1)] $\displaystyle\height J_\Delta = \sum_{\ell=1}^r \height\ J_{\Delta_\ell}=\sum_{\ell=1}^r (|V(\Delta_\ell)|-\dim(\Delta_{\ell}))$.
\item [(2)] $J_\Delta$ is Cohen-Macaulay.
\item [(3)] The Hilbert series of $R/J_\Delta$ has the form
%	\[
  $\displaystyle
	H_{R/J_\Delta}(t)=\prod_{\ell=1}^r H_{R/J_{\Delta_\ell}}(t)$.
%	\]
	\item [(4)] The multiplicity of $R/J_\Delta$ is
	\[
	e(R/J_\Delta)=\prod_{\ell=1}^r {e(R/J_{\Delta_\ell})}=\prod_{\ell=1}^r \binom{|V(\Delta_\ell)|}{\dim(\Delta_{\ell})}.
	\]
\end{itemize}
\end{Theorem}

\begin{proof}
By assumption, there are polynomial rings $R_{\ell}$ in disjoint sets of variables, such that the initial ideals ${\ini}_<(J_{\Delta_\ell})$ are generated by monomials in $R_{\ell}$ and such that
\begin{equation}
\label{tensor}
R/\ini_<(J_\Delta)\cong \bigotimes_{\ell=1}^r R_\ell/\ini_<(J_{\Delta_\ell}).
\end{equation}
This fact together with the known formula for the height of the
determinantal ideals (see e.g.~\cite[Theorem 6.35]{EH}) implies (1). It is known that $R_\ell/\ini_<(J_{\Delta_\ell})$ is Cohen-Macaulay for each $\ell$
(see e.g.~\cite{CGG} and~\cite{St90}). Therefore, by (\ref{tensor}), we get that $R/\ini_<(J_\Delta)$ is Cohen-Macaulay, and so
$R/J_\Delta$ is Cohen--Macaulay as well (see e.g.~\cite[Corollary 3.3.5]{HHBook}).

By~\cite[Corollary 3.3.5]{HHBook}, the modules $R/J_{\Delta}$ and $R/\ini_<(J_\Delta)$ have the same Hilbert series. The Hilbert series and the multiplicity of the classical determinantal rings generated by maximal minors of $X$ are known (see e.g.~\cite[Corollary 1]{CH} or
\cite[Theorem 6.9]{BC} and~\cite[Theorem 3.5]{HT}). Therefore equation (\ref{tensor}) implies statements (3) and (4).
\end{proof}

\begin{Remark}
  \label{rem:universal}
  Let $\Delta$ be a {pure $(m-1)$-dimensional}
  hypergraph on the vertex set~$[n]$  (i.e.~all elements of~$\Delta$ have cardinality~$m$). By  {~\cite[Theorem~1.1]{EHHM},} % Proposition~\ref{closed}, 
  the generators of $J_\Delta$ form a Gr\"obner basis with respect to the lexicographic order induced by
  $x_{11}>\cdots>x_{1n}>\cdots>x_{mn}$ if and only if $\Delta$ is a closed hypergraph.
  It is easy to see that a hypergraph $\Delta$ is closed with respect to any renumbering of its nodes if and only if
  different maximal cliques in its clique decomposition have disjoint vertices; i.e., $\Delta$ is a disjoint union of
  cliques.
  Hence, we conclude that the generators of $J_\Delta$ form a Gr\"obner basis with respect to the lexicographic order
  induced by every term order on variables $x_{ij}$, if and only if $\Delta$ is a full skeleton of a simplex,
  where, for the converse statement, we need that the generators of the classical maximal determinantal ideal $I_{(k)}$ indeed form a
  Gr\"obner basis with respect to any lexicographic term order; see \cite{BZ,SturmfelsZelevinsky}.
As shown in~\cite{CGG,St90}, the generators of $I_{(k)}$ form a  Gr\"obner basis with respect to both diagonal and lexicographic orders.  In the case that  $k=\min\{m,n\}$ the generators of $I_{(k)}$ even form a universal Gr\"obner basis.
% (i.e., a Gr\"obner basis with respect to every term order on $K[X]$).
Therefore, if $\Delta$ is a pure $(m-1)$-dimensional
  hypergraph, then the generators of $J_{\Delta}$ form a universal
  Gr\"obner basis of $J_\Delta$ if and only if $\Delta$ is a disjoint union of $m$-cliques.
\end{Remark}

%%%%%%%%%%%%%%%%%%%%%%%%%%%%%%%%%%%%%%%%%%%%%%%%%%%%%%%%%%
\subsection{Gr\"obner bases of a class of mixed determinantal ideals}
Next, we study a class of mixed determinantal ideals $J_{\Delta}$ which enjoy the property that the minors that generate $J_\Delta$ form a $<_{\lex}$-Gr\"obner basis.
% This property enables us to use Gr\"obner bases theory in order to find regular elements modulo~$J_{\Delta}$.
The minimal primes of the determinantal ideals studied in Sections~\ref{sec:block-adjacent-hyps} and~\ref{sec:further-examples} will be of this type.

We first fix our notation.
For any pair of index sets $S=\{i_1<\cdots<i_k\}\subseteq [m]$ and $T=\{j_1<\cdots <j_t\}\subseteq [n]$, we denote by $X_{T}^S$ the submatrix of $X$ with row indices $i_1,\ldots,i_k$, and column indices $j_1,\ldots,j_t$.
{If $S=[m]$, then we simplify the notation to $X_T$.}

%%%%%%%%%%%%%%%%%%%%%%%%%%%%%%%%%
%

In order to prove our main result of this section, we need the following technical lemma, which is a slight generalization of
\cite[Lemma~4.2]{HSS}.

\begin{Lemma}
  \label{GB1}
  Let $k\le m$, and assume that one of the following holds true:
  \begin{enumerate}
  \item $T=[k]$, and $S'\subseteq[m]$ contains $T$; or
  \item $T=\{n-k+1,\dots,n\}$, and $S'\subseteq[m]$ contains $\{m-k+1,\dots,m\}$. % $T$.
  \end{enumerate}
  Let $G$ be the set of $k$-minors of~$X^{[m]}_{T}$, and let $G'$ be the set of $k'$-minors of~$X^{S'}_{[n]}$, where
  $k'$ is the cardinality of~$S'$.  Then $G\cup G'$ forms a $<_{\lex}$-Gr\"obner basis for the ideal it generates.
\end{Lemma}

\begin{proof}
  We assume that $T=[k]$.  The other case follows by a similar argument.

The set $G$ is a Gr\"obner basis for the ideal $I=( G )$ it generates, and $G'$ is a Gr\"obner basis for $J= ( G')$.
Let $T=[k]$ and $S'=\{c_1,\ldots,c_{k'}\}$ with $T\subset S'$.
By \cite[Lemma 1.3(c)]{Aldo},
in order to prove the statement, we have to show that for arbitrary minors
$f=[a_1\ldots a_k|1\ldots k]\in G$ and $g=[c_1\ldots c_{k'}|b_1\ldots b_{k'}]\in G'$
% $f=[a_1\ldots a_t|b_1\ldots b_t]\in G$ and $g=[a'_1\ldots a'_{t'}|b_1\ldots b_{t'}]\in G'$
there exists an element $h\in I\cap J$ with $\ini(h) = \text{LCM}(\ini(f), \ini(g))$.
To prove this statement, we construct a matrix
\[H=
\left(
  \begin{array}{cccc}
    W & 0 \\
    Y & Z \\
     \end{array}
\right)
\]
with the following properties:
\begin{enumerate}
\item The initial term of the determinant $|H|$ is equal to $\text{LCM}(\ini(f), \ini(g))$.
\item $Y$ and $Z$ are submatrices of $X_{[n]}^{S'}$ with $k'$ rows.
\item ${W}$ and ${Y}$ are submatrices of $X_{T}^{[m]}$ with $k$ columns.
\end{enumerate}
(1)~implies that the initial term of $h:=|H|$ equals $\ini(h) = \text{LCM}(\ini(f), \ini(g))$.  Properties (2)~and
(3)~imply that $h\in I\cap J$.  Indeed, computing the Laplace expansion of $|H|$ starting with the last $k$ rows shows
that $h$ can be written as a polynomial combination of $k$-minors of~$X_{T}^{[m]}$.  Similarly, computing the Laplace
expansion of $|H|$ starting with the first $k'$ columns shows $h\in J$.

% Let $A=\{a_{1},\dots,a_{t}\}$, $B=\{b_{1},\dots,b_{t}\}$, $A'=\{a'_{1},\dots,a'_{t'}\}$ and $B'=\{b'_{1},\dots,b'_{t'}\}$.
Let $A=\{a_{1},\dots,a_{k}\}$ and $B=\{b_{1},\dots,b_{k'}\}$.
Let $B_0$ be the subset of columns of $X_{[n]}^{S'}$ indexed by $b_j$ with $x_{a_\ell,\ell}=x_{c_j,b_{j}}$ for some
$\ell$. %, and let $B_{1}=B\setminus B_{0}$.
Assume that $B\backslash B_{0}=\{b_{j_{1}},\dots,b_{j_{s}}\}$. Similarly assume that $A\backslash A_{0}=\{a_{\ell_{1}},\dots,a_{\ell_{s}}\}$ consists of the subset of rows of $X_T^{A}$  indexed by $a_{\ell_i}$, where $\ell_i\not\in B_0$.
Then we set $Z = X^{S'}_{B\backslash B_{0}}$ and
 $W=X_{T}^{A\backslash A_0}$. 
% be the $k\times (k-|B_0|)$ submatrix of~$X$ with rows indexed by$A$ % $a'_{j_{1}},\dots,a'_{j_{s}}$
%and with columns indexed by~$T\backslash B_0$.
Finally, let $Y=X_{T}^{S'}$.

Properties (2) and (3) are easy to check.
% $A\subseteq S\subseteq S'$, $B'\subseteq T'\subseteq T$, $ $
To see (1), observe that $\text{LCM}(\ini(f), \ini(g))$ appears as a term in the expansion of $|H|$.  Then (1) follows, as in the proof of~\cite[Lemma~4.2]{HSS},
since if $|H|$ would contain a larger term, then either $f$ or $g$ would contain a term larger than $\ini(f)$ or
$\ini(g)$, respectively.% (check this)???
\end{proof}

The next example illustrates the idea of the proof of Lemma~\ref{GB1}.
%, where $n =6$, $t = 6$ and $k = 4$.
\begin{Example}
Let $k=4$, $k'=5$, $T=\{1,2,3,4\}$ and  $S'=\{1,2,3,4,6\}$.
For $f=[1367|1234]\in F$ and $g=[12346|12567]\in G$, we have $\ini(f)=x_{11}x_{32}x_{63}x_{74}$ and $\ini(g)=x_{11}x_{22}x_{35}x_{46}x_{67}$.
Then $B_0=\{1\}$, and
\newcommand\bovermat[2]{%
  \makebox[0pt][l]{$\smash{\overbrace{\phantom{%
    \begin{matrix}#2\end{matrix}}}^{\text{#1}}}$}#2}
\newcommand\bundermat[2]{%
  \makebox[0pt][l]{$\smash{\underbrace{\phantom{%
    \begin{matrix}#2\end{matrix}}}_{\text{#1}}}$}#2}
%\vspace{5pt}
\begin{equation*}
  H= %\left.
    \arraycolsep=2pt
    \begin{pmatrix}
      \arraycolsep=5pt
%      \smash{\overbrace{
      \bovermat{W}{\fbox{$\begin{matrix}
          x_{31} & \underline{{x_{32}}} & x_{33} & x_{34} \\
          x_{61} &  x_{62} & \underline{x_{63}} & x_{64}  \\
          x_{71} &  x_{72} & x_{73} & \underline{x_{74}}
        \end{matrix}$}}
      &
      \arraycolsep=5pt
      \phantom{00}
      \begin{matrix}
        \rlap{0}\phantom{x_{31}} &       \rlap{0}\phantom{x_{31}} & \rlap{0}\phantom{x_{31}} & \rlap{0}\phantom{x_{31}} \\
        \rlap{0}\phantom{x_{31}} &       \rlap{0}\phantom{x_{31}} & \rlap{0}\phantom{x_{31}} & \rlap{0}\phantom{x_{31}} \\
        \rlap{0}\phantom{x_{31}} &       \rlap{0}\phantom{x_{31}} & \rlap{0}\phantom{x_{31}} & \rlap{0}\phantom{x_{31}} \\
      \end{matrix}
      \vspace{3pt}
      \\
      \arraycolsep=5pt
      \bundermat{Y}{\fbox{$\begin{matrix}
          \underline{x_{11}} & {x_{12}} & x_{13} & x_{14} \\
          x_{21} & {x_{22}} & x_{23}  & x_{24}   \\
          x_{31} & {x_{32}} & x_{33}  & x_{34}   \\
          x_{41} & {x_{42}} & x_{43}  & x_{44}   \\
          x_{61} & {x_{62}} & x_{63} & x_{64}
        \end{matrix}$}}
      &
      \arraycolsep=5pt
      \bundermat{Z}{\fbox{$\begin{matrix}
          {x_{12}} & x_{15}  & x_{16} & x_{17}  \\
          \underline{x_{22}} & x_{25}  & x_{26}  & x_{27} \\
          {x_{32}} & \underline{x_{35}}  & x_{36}  & x_{37} \\
          {x_{42}} & x_{45}  & \underline{x_{46}} & x_{47}  \\
          {x_{62}} & x_{65}  & x_{66}  & \underline{x_{67}}  \\
        \end{matrix}$}}
    \end{pmatrix}
    .
%  \right._{x}
\end{equation*}
\vspace{15pt}

\noindent%
It is easy to see that $\ini(h) = {\rm LCM}(\ini(f), \ini(g))$ (which is the product of the underlined entries of the matrix $H$) using the Laplace expansion along
the first four columns
of
the matrix $H$.
\end{Example}

%%%%%%%%%%%%%%%%%%%%%%%%%%%%

Lemma~\ref{GB1} can be applied iteratively to hypergraphs that are obtained by gluing cliques along their intersections as follows.
\begin{Theorem}
  \label{thm:glue-along-cliques}
  Let $\Delta_{1},\dots,\Delta_{r}$ be $m$-cliques on~$[n]$, and let $\Delta_{i,j}=\{V(\Delta_{i})\cap
  V(\Delta_{j})\}$ (a hypergraph with a single hyperedge).  Assume that the following conditions are satisfied:
  \begin{enumerate}
  \item $k_{i,j}:=|V(\Delta_{i,j})|< m$ for all~$i\neq j$.
  \item For all~$i\neq j$, the set $V(\Delta_{i,j})$ consists of either the first or the last $k_{i,j}$ columns of
    $V(\Delta_{i})$.
  \item If $i\neq j\neq k\neq i$, then $V(\Delta_{i})\cap V(\Delta_{j})\cap V(\Delta_{k})=\emptyset$.
  \end{enumerate}
  Then the set of maximal minors of the ideal $J_{\Delta}$ of $\Delta=\bigcup_{i}\Delta_{i}\cup\bigcup_{i\neq
    j}\Delta_{i,j}$ forms a $<_{\lex}$-Gr\"obner basis for~$J_{\Delta}$. In particular, $I$ is a radical ideal.
\end{Theorem}
\begin{proof}
  The assumptions imply that for all $i\neq j$, the initial terms of elements of $J_{\Delta_i}$ and of elements
  of $J_{\Delta_j}$ are relatively prime, and so $S$-pairs constructed from elments of $J_{\Delta_{i}}$ and
  $J_{\Delta_{j}}$ reduce to zero.
  By Lemma~\ref{GB1}, any $S$-pair of an element of $J_{\Delta_{i,j}}$ with an element of $J_{\Delta_{i}}$ reduces to
  zero.  Finally, by~(3), the initial terms of elements of $J_{\Delta_{i,j}}$ and elements of $J_{\Delta_{k,\ell}}$ are
  relatively prime, unless $\{i,j\}=\{k,\ell\}$, and so their $S$-pairs also reduce to zero.  This implies the first
  statement. Since the initial terms of the Gr\"obner basis are all squarefree, $I$ is radical.
\end{proof}

\section{Determinantal ideals of block adjacent hypergraphs}
\label{sec:block-adjacent-hyps}

Let $\Delta={\Delta_1}\cup\cdots\cup{\Delta_r}$, where each $\Delta_i=\Delta_{(m),\{u_{i},u_{i}+1,\dots,v_{i}\}}$ is an $m$-clique on
$\{u_i,u_i+1,\ldots,v_i\}$ such that $u_1<u_2<\cdots<u_r$ and $V(\Delta_{i-1})\cap
V(\Delta_{i})=\{u_{i},u_{i}+1,\ldots,u_{i}+t_i-1\}$ for some $0\le t_i<m$. Observe that $\Delta$ is closed.  The
determinantal ideal $J_{\Delta}$ generalizes the ideal $J_{\smash[t]{\Delta_{(m)}^{\adj}}}$ of maximal adjacent minors studied by
Ho\c{s}ten and Sullivant in~\cite{HSS}, where $\smash[t]{\Delta_{(m)}^{\adj}}$ is the hypergraph on $[n]$ with all hyperedges of
the form $\{a,a+1\ldots,a+m-1\}$ for $1\leq a\leq n-m+1$.
We first consider the case where $t_i=m-1$ for all $i$, i.e., the vertex sets of successive
cliques intersect in $m-1$ vertices.   In this case, $J_{\Delta}$ contains
$J_{\smash[t]{\Delta_{(m)}^{\adj}}}$, % the ideal of maximal adjacent minors.
and the hypergraph is called \emph{block adjacent}.

%  Then we turn to the general case, % in which $0<t_i<m$ for all $i$, i.e.~$\Delta$ is a \emph{union} of block adjacent hypergraphs.

\subsection{Block adjacent hypergraphs and prime sequences}
\label{sec:block-adjacent-prime-seqs}

Let $\Delta=\Delta_1\cup\cdots\cup \Delta_r$ be a block adjacent hypergraph on the vertex set $[n]$ such that
$V(\Delta_{i-1})\cap V(\Delta_{i})=\{u_{i},u_{i}+1,\ldots,u_{i}+m-2\}$ for all $i$.  To describe the minimal primes for
$J_\Delta$ in terms of other determinantal hypergraph ideals, we generalize the definition of a prime sequence
from~\cite{HSS}.
\begin{Definition}
\label{def:prime-sequence:1}
A \emph{prime sequence} of $\Delta$ is a sequence of intervals
\[
 \Gamma:\ \ [a_{1}, b_{1}],[a_{2}, b_{2}],\ldots,[a_{t}, b_{t}]
\]
with the following properties:
\begin{itemize}
\item [(1)] $1=a_{1}<a_{2}<\cdots<a_{t}$ and $b_{1} < b_{2}<\cdots<b_{t}=n$;

\item [(2)] $b_{\ell}-a_{\ell}\geq  m-1$ for $\ell=1,t$ and $b_{\ell}-a_{\ell}\geq  m$ for $2\leq \ell\leq t-1$;

\item [(3)] $0 \leq b_{\ell}- a_{\ell+1} \leq m - 2$ for all $\ell$;

\item [(4)] if $|V(\Delta_i)|>m$ for some $i$, then there exists $\ell$ with $V(\Delta_i)\subseteq [a_{\ell}, b_{\ell}]$.
\end{itemize}
We denote the set of all prime sequences of $\Delta$ by $\mathcal{A}_\Delta$.

To each prime sequence $\Gamma$ we associate the ideal $P_{\Gamma}$ of $R$ generated by
\begin{itemize}
 \item[(i)] all maximal $m$-minors of the submatrix $X_{[a_{\ell},b_{\ell}]}^{[m]}$ for $\ell=1,\ldots,t$, and

 \item[(ii)] all  maximal $(b_{\ell}-a_{\ell+1}+1)$-minors of the submatrix $X_{[a_{\ell+1}, b_{\ell}]}^{[m]}$ for $\ell=1,\ldots,t-1$.
\end{itemize}
Observe that $P_\Gamma=J_{\Delta_\Gamma}$, where $\Delta_\Gamma$ is the union of the $m$-cliques % $(m-1)$-dimensional hyperedges
on the vertex sets
$[a_i,b_i]$ and the $(b_{\ell}-a_{\ell+1}+1)$-cliques on $[a_{\ell+1}, b_{\ell}]$.
If we need to restrict the row indices of the defining minors, we write $P_\Gamma^B:=J_{\Delta_{\Gamma}}^{B}$.

\end{Definition}

\begin{Example}
  Let $m=3$.
  The hypergraph from Figure~\ref{Fig:block-adjacent-2} has the clique decomposition
  $\Delta_{(3),1234}\cup\{345\}\cup\{456\}\cup\{567\}$.
  % $\Delta_1\cup\Delta_2\cup\Delta_3$, where $\Delta_1$ is the $3$-clique on $1234$, $\Delta_2=345$, $\Delta_3=456$ and
  % $\Delta_4=567$.
  Therefore, $\Delta$ is block adjacent.
  It has the following seven prime sequences:
  \begin{align*}
    \Gamma:&\ [1,7]         &
    \Gamma:&\ [1,6],[5,7]   &
    \Gamma:&\ [1,5], [5,7]  \\
    \Gamma:&\ [1,5],[4,7]   &
    \Gamma:&\ [1,4], [4,7]  &
    \Gamma:&\ [1,4], [3,7]  \\
    \Gamma:&\ [1,4], [3,6],[5,7].
  \end{align*}
\end{Example}

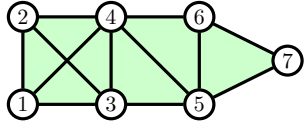
\begin{figure}[ht]
  \begin{center}
    \begin{tikzpicture} [scale = .58, very thick = 10mm]
% 3-cliques
      \fill[clique] (-12,-1) -- (-10,-1) -- (-8,-1) -- (-6,0) -- (-8,1) -- (-10,1) -- (-12,1);
% nodes
      \node (n1) at (-12,-1)  [Cgray] {$1$};
      \node (n2) at (-12,1)  [Cgray] {$2$};
      \node (n3) at (-10,-1)  [Cgray] {$3$};
      \node (n4) at (-10,1)  [Cgray] {$4$};
      \node (n5) at (-8,-1)  [Cgray] {$5$};
      \node (n6) at (-8,1)  [Cgray] {$6$};
      \node (n7) at (-6,0)  [Cgray] {$7$};
% 2-cliques
      \foreach \from/\to in {n1/n2,n1/n3,n1/n4,n2/n3,n2/n4,n3/n4, n3/n5,n4/n5, n4/n6,n5/n6, n5/n7,n6/n7}
      \draw[] (\from) -- (\to);
    \end{tikzpicture}
  \end{center}
  \caption{A block adjacent hypergraph.}\label{Fig:block-adjacent-2}
\end{figure}

We first fix our notation.
For $1\le a< b\le n$, let $X[a,b]:=X_{\{a,a+1,\dots,b\}}$ be the submatrix of $X$ of those columns with indices
$a,a+1,\dots,b$.
Denote by $\spn[a, b]$ the vector space generated by the columns of $X[a,b]$. The maximum number of linearly independent columns of $X[a,b]$, i.e., rank of $X[a,b]$, is denoted by $\rk[a,b]$.

\begin{Theorem}
\label{prime adjacent}
Let $\Delta=\Delta_1\cup\cdots\cup\Delta_r$ be a block adjacent hypergraph.
The minimal primary decomposition of $J_\Delta$ is given by
%\[
$J_\Delta= \bigcap_{\Gamma\in \mathcal{A}_\Delta} P_\Gamma.$
%\]
\end{Theorem}

\begin{proof}
Any block adjacent hypergraph is closed, and so, by Proposition~\ref{closed}, the ideal $J_\Delta$ is radical.
Therefore, by the Nullstellensatz, it is enough to show that $\mathcal{V}(J_\Delta)= \bigcup_{\Gamma\in \mathcal{A}_\Delta} \mathcal{V}(P_\Gamma)$.
If $|V(\Delta_i)|=m$ for all~$i$, then $\Delta=\Delta^{\adj}_{(m)}$ is an adjacent hypergraph, and the statement holds by~\cite[Theorem~3.5]{HSS}.  Assume that there exists at least one clique with $|V(\Delta_\ell)|>m$.
Let $X$ be a matrix in $\mathcal{V}(J_\Delta)$. Note that $\mathcal{V}(J_\Delta)$ is a subset of $\mathcal{V}(J_{\smash[t]{\Delta^{\adj}_{(m)}}})$.
By~\cite[Lemma~3.4]{HSS} there exists a sequence $\Gamma:\ [a_1,b_1],\ldots,[a_k,b_k]$
such that $X\in \mathcal{V}(P_\Gamma)$ and $\Gamma$ has properties (1), (2) and (3). Assume that
$\Delta_{i_1},\ldots,\Delta_{i_p}$ are the cliques of $\Delta$
such that $|V(\Delta_{i_j})|>m$ and $V(\Delta_{i_j})$ is not the subset of any interval of $\Gamma$.
We will construct a sequence $\Gamma'$ with properties (1), (2), and (3) such that $X\in \V(P_{\Gamma'})$,
$V(\Delta_{i_1})$ is the subset of an interval of $\Gamma'$ and \emph{just} for cliques $\Delta_{i_2},\ldots,\Delta_{i_p}$,
the vertex set is not contained in any interval of $\Gamma'$.  Iterating this argument we obtain a prime sequence $\Gamma'$ for $\Delta$ such that $X\in \mathcal{V}(P_{\Gamma})$. % the variety of its associated ideal contains $X$.
The minimality of the constructed prime ideals follows by Corollary~4.5 and Corollary 4.6 of~\cite{HSS},
which completes the proof.

\medskip

Assume that $V(\Delta_{i_1})=[u,v]$.
Therefore there exist integers $s$ and $t$ such that
\[[u,v]\subseteq [a_{t},b_t]\cup [a_{t+1},b_{t+1}]\cup\cdots\cup [a_s,b_s],\]
where $a_t<u<b_t$ or $a_{t+1}=b_{t}=u$, and $a_s <v< b_{s}$ or $b_{s-1}=a_{s}=v$.
We consider four cases:

\medskip

Case 1.  If $v-a_s+1<m$ and $b_{t}-u+1<m$, then we consider the sequence
\[
\Gamma_1:\ \ [a_1,b_1],\ldots,[a_{t},b_t],[u,v],[a_s,b_s],\ldots,[a_k,b_k].
\]
Note that all $m$-minors of $X[u,v]$ are zero, since $X\in \V(J_\Delta)\subset \V(J_{\Delta_\ell})$.
Also all maximal minors of $X[u,b_t]$ and $X[a_s,v]$ are zero, since $[a_{t+1},b_t]\subseteq [u,b_t]$, $[a_s,b_{s-1}]\subseteq [a_{s},v]$ and by our assumption that $X\in P_\Gamma$ we have all maximal minors of $X[a_{t+1},b_t]$ and $X[a_s,b_{s-1}]$ are zero.
Therefore $X\in \mathcal{V}(P_{\Gamma_1})$, as desired.

\medskip
Case 2. Let $v-a_s+1\geq m$ and $b_{t}-u+1< m$. If $v=n$, then the sequence
\[
\Gamma':\ \ [a_1,b_1],\ldots,[a_{t},b_t],[u,v]
\]
has desired properties by the same argument as Case 1.
Assume that $v<n$.
If $\rk[a_s,v]=m-1$, then $\spn[a_s,b_s]=\spn[a_s,v]=\spn[u,v]=\spn[u,b_s]$.
This shows that all $m$-minors of $X[u,b_s]$ are zero. Moreover,
$[a_{t+1},b_t]\subseteq [u,b_t]$ implies that all maximal minors of $X[u,b_t]$ are zero, since $X\in \V(P_\Gamma)$. Thus the sequence
\[
\Gamma_2:\ \ [a_1,b_1],\ldots,[a_{t},b_t],[u,b_s],[a_{s+1},b_{s+1}],\ldots,[a_k,b_k]
\]
has desired properties.

\medskip

Assume that $\rk[a_s,v]<m-1$. If $b_s>v+1$, then we define $\Gamma'_2$ as
\[
\Gamma'_2:\ \ [a_1,b_1],\ldots,[a_{t},b_t],[u,v],[v-(m-2),b_s],[a_{s+1},b_{s+1}],\ldots,[a_k,b_k].
\]
Note that the width of the interval $[v-(m-2),b_s]$ is greater than $m$ and the width of the interval $[v-(m-2),v]$, i.e., the intersection of intervals
$[u,v]$ and $[v-(m-2),b_s]$ is $m-1$. These facts, together with the above condition on $\rk[a_s,v]$ show that the constructed sequence $\Gamma_2'$ has desired properties.

\medskip

Now assume that $b_s=v+1$. If $b_s=n$, then $\Gamma'_2$ has desired properties. Otherwise $[a_{s+1},b_{s+1}]$ is among the intervals $\Gamma$.
Now we should consider two different subcases:

Subcase 2.1. Let $a_{s+1}<v$. If $\rk[a_{s+1},v]<v-a_{s+1}+1$, then the sequence
\[
\Gamma_{2.1}:\ \ [a_1,b_1],\ldots,[a_{t},b_t],[u,v],[a_{s+1},b_{s+1}],[a_{s+2},b_{s+2}],\ldots,[a_k,b_k].
\]
fulfills our conditions. If $\rk[a_{s+1},v]=v-a_{s+1}+1$, then $v+1^{\rm th}$ column belongs to $\spn[a_{s+1},v]$ which is the subset of $\spn[u,v]$. Therefore
all $m$-minors of $X[u,v+1]$ are zero which implies that the following sequence fulfills our conditions:
\[
\Gamma_{2.2}:\ \ [a_1,b_1],\ldots,[a_{t},b_t],[u,v+1],[a_{s+1},b_{s+1}],[a_{s+2},b_{s+2}],\ldots,[a_k,b_k].
\]

Subcase 2.2.
Let $a_{s+1}\geq v$.
If $a_{s+1}=v+1$, then the variables of the $v+1^{\rm th}$ column of $X$ are in $P_\Gamma$, and so all $m$-minors of $X[u,b_s]$ are zero. Hence, the sequence $\Gamma_{2.2}$ has again desired properties.
Now, assume that $a_{s+1}=v$. If the $v^{\rm th}$ column of $X$ is nonzero, then $v+1^{\rm th}$ column is a multiplication of the $v^{\rm th}$ column and so it belongs to $\spn[u,v]$, since $\rk[v,v+1]=1$. Therefore $\Gamma_{2.2}$ fulfills our conditions.
Otherwise, the following sequence has desired properties:
\[
\Gamma_{2.3}:\ \ [a_1,b_1],\ldots,[a_{t},b_t],[u,v],[v,b_{s+1}],[a_{s+2},b_{s+2}],\ldots,[a_k,b_k].
\]

\medskip
Case 3. Let $v-a_s+1<m$ and $b_{t}-u+1\geq m$.
If $u=1$, then by the same argument as Case 1 the sequence
\[
 \Gamma':\ \ [u,v],[a_s,b_s],\ldots,[a_k,b_k]
\]
has desired property.
Suppose that $u>1$.
If $\rk[u,b_t]=m-1$, then $\spn[a_t,b_t]=\spn[u,b_t]=\spn[u,v]=\spn[a_t,v]$.
Then $\rk[a_t,v]<m$ shows that all $m$-minors of $X[a_t,v]$ are zero, and so the following sequence has desired properties:
\[
\Gamma_3:\ \ [a_1,b_1],\ldots,[a_{t-1},b_{t-1}],[a_{t},v],[a_s,b_s],\ldots,[a_k,b_k].
\]

Now assume that $\rk[u,b_t]<m-1$. If $a_t<u-1$, then we consider the sequence
\[
\Gamma_3':\ \ [a_1,b_1],\ldots,[a_{t},u+(m-2)],[u,v],[a_s,b_s],[a_{s+1},b_{s+1}],\ldots,[a_k,b_k].
\]
The width of the interval $[a_{t},u+(m-2)]$ is greater than $m$, and the width of $[u,u+(m-2)]$, i.e., the intersection of the intervals $[u,v]$ and $[a_{t},u+(m-2)]$ is $m-1$. Hence, our condition on $\rk[u,b_t]$ guarantees that $\Gamma'_3$ fulfills desired properties.

\medskip

Let $a_t=u-1$. If
$a_t=1$, then $\Gamma_3'$ has desired properties.
Assume that $a_t=u-1>1$. So we have $[a_{t-1},b_{t-1}]\in \Gamma$. Now two different subcases should be considered:

Subcase 3.1. Let $b_{t-1}>u$. If $\rk[u,b_{t-1}]<b_{t-1}-u+1$, then the sequence
\[
\Gamma_{3.1}:\ \ [a_1,b_1],\ldots,[a_{t-1},b_{t-1}],[u,v],[a_{s},b_{s}],\ldots,[a_k,b_k]
\]
has desired properties. If $\rk[u,b_{t-1}]=b_{t-1}-u+1$, then we consider the sequence
\[
\Gamma_{3.2}:\ \ [a_1,b_1],\ldots,[a_{t-1},b_{t-1}],[u-1,v],[a_{s},b_{s}],\ldots,[a_k,b_k].
\]
Note that $u-1^{\rm th}$ column belongs to $\spn[u,b_{t-1}]$. This together with the fact that
$\spn[u,b_{t-1}]\subseteq\spn[u,v]$ implies that the $u-1^{\rm th}$ column belongs to
$\spn[u,v]$ and so all $m$-minors of $X[u-1,v]$ are zero.

Subcase 3.2. Let $b_{t-1}\leq u$. If $b_{t-1}=u-1$, then the variables corresponding to the $u-1^{\rm th}$ column of $X$ are all in $P_\Gamma$,
and so all $m$-minors of $X[a_{t-1},v]$ are zero. Hence, the sequence $\Gamma_{3.2}$ has desired properties.

Let $b_{t-1}=u$. If the $u^{\rm th}$ column of $X$ is nonzero, then $u-1^{\rm th}$ column is a multiplication of the $u^{\rm th}$ column
and so it belongs to $\spn[u,v]$, since $\rk[u-1,u]=1$. Therefore $\Gamma_{3.2}$ has desired properties.
Otherwise, the sequence
\[
\Gamma_{3.3}:\ \ [a_1,b_1],\ldots,[a_{t-1},b_{t-1}],[u,v],[a_s,b_{s}],\ldots,[a_k,b_k]
\]
has desired properties.

\medskip

Case 4. If $v-a_s+1\geq m$ and $b_{t}-u+1\geq m$, then by combination of the arguments given in Case 2 and Case 3, we can construct the proper prime sequences.
\end{proof}

%%%%%%%%%%%%%%%%%%%%%%%%%%%%%%%%%%%%%%%%%%%%%%%%%%%%%%%%%%%%%%%%%%%%%%%%%%%%%%%%%%%%%%%%%%%%%%%%%%%

\subsection{Unions of subsequent block adjacent hypergraphs}
\label{sec:union-subs-block-adjacent}

Next we describe the primary decomposition of $J_\Delta$ when consecutive cliques intersect in less than $m-1$ vertices.
\begin{Definition}
\label{def:prime-sequence:2}
Let $\Delta=\Delta_1\cup\Delta_2\cup\cdots\cup\Delta_r$, where $\Delta_i$ is a block adjacent hypergraph on $[u_i,v_i]$ for each $i$, and $v_{i-1}-m+3\leq u_i$.
Then $\Delta$ is called a \emph{union of subsequent block adjacent hypergraphs}.
% For each $i=1,\ldots,r$ let $\Gamma_{i}$ be a prime sequence of $\Delta_i$. % for $i=1,\ldots,r$.
% Then we call $\Gamma=\{\Gamma_1,\Gamma_2,\ldots,\Gamma_r\}$ a \emph{prime sequence} of $[u_1,v_{r}]$.  Denote by
% $\mathcal{A}_\Delta$ the set of all prime sequences of $\Delta$.
\end{Definition}

\begin{Theorem}
  \label{thm:block-adj-prime-split}
  Let $\Delta=\Delta_1\cup\cdots\cup\Delta_r$ be a union of subsequent block adjacent hypergraphs. Then
  $J_{\Delta_1}+\cdots+J_{\Delta_r}$ is a prime splitting of~$J_\Delta$.
\end{Theorem}
We present the proof later in Section~\ref{sec:localization-lemmata}.
Comparing with Theorem~\ref{prime adjacent}, we see that, as in Example~\ref{ex:disj-in-id-not-suff}, it is not sufficient to write an ideal as a sum of ideals with disjoint initial ideals.
In order to obtain a prime splitting, it is necessary that the block adjacent hypergraphs $\Delta_{i}$ overlap in at most $m-2$ vertices.

When $\Delta$ is a union of subsequent block adjacent hypergraphs, it follows from
Theorem~\ref{thm:block-adj-prime-split} that the minimal primes of $J_{\Delta}$ can be described in terms of prime
sequences as follows.

\begin{Definition}
Let $\Delta=\Delta_1\cup\Delta_2\cup\cdots\cup\Delta_r$ be a union of subsequent block adjacent hypergraphs.
For each $i=1,\ldots,r$ let $\Gamma_{i}$
% \[
% \Gamma_i=\{[a_{i1}, b_{i1}],[a_{i2}, b_{i2}],\ldots,[a_{it_i}, b_{it_i}]\}
% \]
be a prime sequence of $\Delta_i$.
Then we call $\Gamma=\{\Gamma_1,\Gamma_2,\ldots,\Gamma_r\}$ a \emph{prime sequence} of $[u_1,v_{r}]$.
Denote by $\mathcal{A}_\Delta$ the set of all prime sequences of $\Delta$.

To the prime sequence $\Gamma=\{\Gamma_1,\Gamma_2,\ldots,\Gamma_r\}$ of $\Delta$ we associate the ideal
\begin{equation*}
P_{\Gamma}=P_{\Gamma_1}+\ldots+P_{\Gamma_r} \subset K[X].
\end{equation*}
As above, if we need to restrict the row indices of the defining minors, we add a superscript: $P_{\Gamma}^{B}:=P_{\Gamma_1}^{B}+\ldots+P_{\Gamma_r}^{B}$.
\end{Definition}

\begin{Theorem}
  \label{general}
  Let $\Delta$ be a union of subsequent block adjacent hypergraphs.
  The minimal primary decomposition of $J_\Delta$ is given by
  $J_\Delta=\bigcap_{\Gamma\in \mathcal{A}_\Delta}P_{\Gamma}$.
  In particular, each ideal $P_{\Gamma}$ is prime.
\end{Theorem}

Theorem~\ref{general} is a direct consequence of Theorem~\ref{prime adjacent} and Theorem~\ref{thm:block-adj-prime-split}.
\begin{Example}
  The hypergraph from Figure~\ref{Fig:subs-block-adjacent:A} is a union $\Delta=\Delta_1\cup\Delta_2$, where
  $\Delta_1=\{123\}$ and where
  $\Delta_2=\{345\}\cup\Delta_{(3),4567}\cup\{678\}\cup\{789\}$ % $\Delta_2=\{345,456,457,467,567,678,789\}$
  is a block adjacent hypergraph on $\{3,4,\ldots,9\}$.  The prime sequences of $\Delta$ are
  \begin{align*}
    &\Gamma_{1}:[1,3], \Gamma_{2}:[3,9],             &
    &\Gamma_{1}:[1,3], \Gamma_{2}:[3,5],[4,9],       \\
    &\Gamma_{1}:[1,3], \Gamma_{2}:[3,5],[4,7],[7,9], &
    &\Gamma_{1}:[1,3], \Gamma_{2}:[3,5],[4,8],[7,9], \\
    &\Gamma_{1}:[1,3], \Gamma_{2}:[3,7],[7,9], &
    &\Gamma_{1}:[1,3], \Gamma_{2}:[3,7],[6,9], \\
    &\Gamma_{1}:[1,3], \Gamma_{2}:[3,8],[7,9].
  \end{align*}
\end{Example}

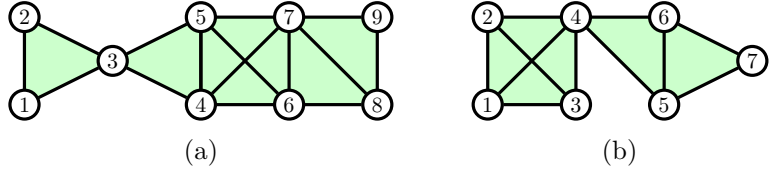
\begin{figure}[ht]
  \begin{center}
    \subcaptionbox{\label{Fig:subs-block-adjacent:A}}{\begin{tikzpicture} [scale = .58, very thick = 10mm]
% 3-cliques
      \fill[clique] (-12,-1) -- (-10,0) -- (-8,-1) -- (-4,-1) -- (-4,1) -- (-8,1) -- (-10,0) -- (-12,1);
% nodes
      \node (n1) at (-12,-1)  [Cgray] {$1$};
      \node (n2) at (-12,1)  [Cgray] {$2$};
      \node (n3) at (-10,0)  [Cgray] {$3$};
      \node (n4) at (-8,-1)  [Cgray] {$4$};
      \node (n5) at (-8,1)  [Cgray] {$5$};
      \node (n6) at (-6,-1)  [Cgray] {$6$};
      \node (n7) at (-6,1)  [Cgray] {$7$};
      \node (n8) at (-4,-1)  [Cgray] {$8$};
      \node (n9) at (-4,1)  [Cgray] {$9$};
% 2-cliques
      \foreach \from/\to in {n1/n2,n1/n3,n2/n3, n3/n4,n3/n5,n4/n5, n4/n5,n4/n6,n4/n7,n5/n6,n5/n7,n6/n7, n6/n8,n7/n8, n7/n9,n8/n9}
        \draw[] (\from) -- (\to);
    \end{tikzpicture}}
  $\qquad$
  \subcaptionbox{\label{Fig:subs-block-adjacent:B}}{\begin{tikzpicture} [scale = .58, very thick = 10mm]
      \fill[clique] (0,-1) -- (2,-1) -- (2,1) -- (4,-1) -- (6,0) -- (4,1) -- (2,1) -- (0,1);
  % nodes
      \node (n1) at (0,-1)  [Cgray] {$1$};
      \node (n2) at (0,1)  [Cgray] {$2$};
      \node (n3) at (2,-1)  [Cgray] {$3$};
      \node (n4) at (2,1)  [Cgray] {$4$};
      \node (n5) at (4,-1)  [Cgray] {$5$};
      \node (n6) at (4,1)  [Cgray] {$6$};
      \node (n7) at (6,0)  [Cgray] {$7$};
% 2-cliques
      \foreach \from/\to in {n1/n2,n1/n3,n1/n4,n2/n3,n2/n4,n3/n4, n4/n5,n4/n6,n5/n6, n5/n7,n6/n7}
        \draw[] (\from) -- (\to);

      % \node (m) at (-7,-2.5)  [Cwhite] {$(a)$};
      % \node (mp) at (3,-2.5)  [Cwhite] {$(b)$};
      
    \end{tikzpicture}}
  \end{center}
  \caption{Two unions of subsequent block adjacent hypergraphs.}\label{Fig:subs-block-adjacent}
\end{figure}

\begin{Example}
  \label{ex1}
  The hypergraph from Figure~\ref{Fig:subs-block-adjacent:B} is a union $\Delta=\Delta_{(3),1234}\cup\Delta_2$, where
  $\Delta_2=\{456,567\}$ is an adjacent hypergraph on $\{4,5,6,7\}$.  The minimal primes of $J_{\Delta}$ are
  \begin{itemize}
  \item $P_1=([123],[134],[124],[234],{[12|56],[13|56],[23|56]})$,
  \item $P_2=([123],[134],[124],[234],[456],[457],[467],[567])$,
  \end{itemize}
  corresponding to the prime sequences
\[
\Gamma=\{\Gamma_1:[1,4], \Gamma_{2}:[4,6],[5,7]\}\quad\text{and}\quad
\Gamma'=\{\Gamma_1:[1,4], \Gamma_{2}:[4,7]\}.
\]
\end{Example}

\subsection{Proofs of the main results}
\label{sec:localization-lemmata}

Our strategy to prove the prime splitting is to find regular elements modulo the ideal.  Then we localize and find a
ring automorphism that transforms our ideal into a sum of ideals that are defined in disjoint sets of variables.

\begin{Lemma}[Localization lemma]
  \label{rules}
  Let $X$ be an $m\times n$-matrix of indeterminates and let $I\subset K[X]$ be an ideal generated by a set $\Gc$ of
  minors.
  Furthermore, let $i_{1},\dots,i_{k}\in[m]$ and $j_{1},\dots,j_{k}\in[n]$.  Assume that for each minor $[a_1\ldots
  a_t|b_1\ldots b_t]\in \Gc$ % with $i_{0}\not\in\{a_1,\ldots,a_t\}$ and $j_{0}\not\in\{b_1,\ldots,b_t\}$,
 the minors
  $[\alpha_1\ldots \alpha_t|b_1\ldots b_t]$ also belong to $\Gc$ for all
  $\{\alpha_{1},\ldots,\alpha_{t}\}\subset\{i_{1},\dots,i_{k},a_{1},\ldots,a_{t}\}$,
where $\alpha_{1}<\cdots<\alpha_{t}$.
  Then the localizations $(R/I)_{[i_{1},\dots,i_{k}|j_{1},\dots,j_{k}]}\iso
  (R/J)_{[i_{1},\dots,i_{k}|j_{1},\dots,j_{k}]}$ at the minor $[i_{1},\dots,i_{k}|j_{1},\dots,j_{k}]$ are isomorphic, where $J$ is generated by
\begin{itemize}
\item [(a)] the minors $[a_1\ldots a_t|b_1\ldots b_t]\in \Gc$ with
  % $\{a_{1},\ldots,a_{t}\}\cap\{i_{1},\dots,i_{k}\} = \emptyset$ or
  $\{b_{1},\ldots,b_{t}\}\cap\{j_{1},\dots,j_{k}\} = \emptyset$,
\item
  [(b)] % the minors $[a_1\ldots\hat{a}_\ell\ldots a_t|b_1\ldots\hat{b}_k\ldots b_t]$ where $[a_1\ldots a_t|b_1\ldots b_t]\in \Gc$ and $a_\ell=i_{0}$ and $b_k=j_{0}$ for some $\ell$ and $k$.
  the minors $[\alpha_{1}\ldots\alpha_{t-r}|b_1\ldots\hat{b}_{k_{1}}\ldots\hat{b}_{k_{r}}\ldots\ldots b_t]$ where
  $[a_1\ldots a_t|b_1\ldots b_t]\in \Gc$ and where $\{b_{k_{1}},\ldots,b_{k-r}\} =
  \{b_{1},\ldots,b_{t}\}\cap\{j_{1},\ldots,j_{k}\}$ and
  $\alpha_{1},\ldots,\alpha_{t-r}\in\{a_{1},\ldots,a_{t},i_{1},\ldots,i_{k}\}$.
\end{itemize}
\end{Lemma}
\begin{proof}
  For simplicity assume that $i_{1}=j_{1}=1,\dots,i_{k}=j_{k}=k$.  Let $R=K[X]$.  
  The idea of the proof is the following:
  Denote by $X_{[k]}^{[k]}$ the submatrix of $X$ that consists of the first $k$ rows and the first $k$ columns, denote by
  $X_{[k]}^{\overline{[k]}}$ the submatrix of $X$ that consists of the last $m-k$ rows %\textcolor{blue}{$\overline{[k]}$}
 and the first $k$ columns, and so on.
  The matrix $X_{[k]}^{[k]}$ is invertible in the ring $R_{[1\dots k|1\dots k]}$ that arises from $R$ by localizing with respect
  to the minor $[1\dots k|1\dots k]$.  Denote the inverse by~$A$.  Thus, we can multiply the matrix $X$ from the left with the
  invertible matrix
  \begin{equation*}
    \begin{pmatrix}
%      I_{k} & 0 \\
      A & 0 \\
 & \\
      -X_{[k]}^{\overline{[k]}} & I_{m-k}
    \end{pmatrix},
  \end{equation*}
  where $I_{k}$ and $I_{m-k}$ denote unit matrices of corresponding sizes.  We obtain a new matrix
  \begin{equation*}
    X' :=
    \begin{pmatrix}
      % I_{k} & 0 \\
      A & 0 \\
 & \\
      -X_{[k]}^{\overline{[k]}}A  & I_{m-k}
    \end{pmatrix}
    \cdot X
    =
    \begin{pmatrix}
      % X_{[k],[k]} & X_{\overline{[k]}}^{[k]}  \\
      I_{k} &  & A X_{\overline{[k]}}^{[k]} \\
 & \\
      0    &  & X_{\overline{[k]}}^{\overline{[k]}} - X_{[k]}^{\overline{[k]}} A X_{\overline{[k]}}^{[k]}
    \end{pmatrix}.
  \end{equation*}
  This transformation preserves the ranks of submatrices that arise by selecting columns, in the following sense: The
  submatrix of $X$ with columns $\{b_{1},\dots,b_{r}\}$ has rank less than $r$ if and only if the corresponding
  submatrix of $X'$ with columns $\{b_{1},\dots,b_{r}\}$ has rank less than~$r$.  If $\{b_{1},\dots,b_{r}\}$
  intersects~$[k]$, % say $\{b_{1},\dots,b_{k}\}\cap[k] = \dots$,
  then, due to the special structure of~$X'$, this rank condition is equivalent to saying that all maximal minors of
  $X'$ involving the columns $\{b_{1},\dots,b_{k}\}\setminus [k]$ vanish.  Now we take those entries $x'_{ij}$ of $X'$
  with $i,j>k$ as new variables, and we note that the transformation of variables from $\{x_{ij}:i\in[m],j\in[n]\}$ to
  $\{x_{ij}:\min\{i,j\}\le k\}\cup\{x'_{ij}:\min\{i,j\}>k\}$ is invertible, where the inverse is given by
  \begin{equation*}
    x_{ij} = x'_{ij} + \sum_{i',j'=1}^{k} x_{ij'} A_{j'i'} x_{i'j}.
  \end{equation*}
  This line of argument proves the set-theoretic variant of the statement.

  To finish the proof, it remains to show that we can express minors of $X'$ as algebraic combinations of corresponding
  minors of~$X$, and vice versa.  This follows from multilinearity of the determinant: Each row of $X'$ is a linear
  combination of rows of~$X$ with coefficients involving entries of $X$ and $[1\dots k|1\dots k]^{-1}$, and vice versa.
\end{proof}

%%%%%%%%%%%%%%%

\begin{Corollary}
\label{cor:rules}
Let $K$ be a field, $X$ be an $m\times n$-matrix of indeterminates and $I_m\subset R=K[X]$ be the ideal generated by all
$m$-minors of $X$. Furthermore, let $x_{ij}$ be an entry of $X$.  Then $(R/I_m)_{x_{ij}}\iso (R/J)_{x_{ij}}$ where $J$
is generated by the $(m-1)$-minors $[a_1\ldots a_{m-1}|b_1\ldots b_{m-1}]$ with
$i\notin\{a_{1},\dots,a_{m-1}\}$ and $j\notin\{b_{1},\dots,b_{m-1}\}$.
\end{Corollary}

%%%%%%%%%%%%%%%%

The next lemma is our central tool in order to prove prime splittings.
\begin{Lemma}[Prime splitting lemma]
  \label{lem:prime-splitting}
  Assume that $K$ is algebraically closed.
  Let $\Delta=\Delta_{1}\cup\Delta_{2}$, and let $V(\Delta_{1})\cap V(\Delta_{2})=\{j_{1},\dots,j_{s}\}$.  Assume that
  there exist $s$ pairwise different elements $i_{1},\dots,i_{s}$ such that the {minor $[i_{1}\cdots
  i_{s}|j_{1}\cdots j_{s}]$} is regular modulo $J_{\Delta}$, $J_{\Delta_{1}}$ and~$J_{\Delta_{2}}$.
  Then $J_{\Delta}=J_{\Delta_{1}}+J_{\Delta_{2}}$ is a prime splitting.
  % In particular, if $J_{\Delta_{1}}$ and $J_{\Delta_{2}}$ are both prime, then $J_{\Delta}$ is prime.
\end{Lemma}
{To check that a {minor} is regular it suffices to show that none of the variables that appear in
its initial term $x_{i_{1},j_{1}}\cdots x_{i_{s},j_{s}}$ divide any of the initial terms of a Gr\"obner basis.
}
  \begin{proof}
{In the easiest case $s=1$, we just need to prove that the variable~$x_{i_{1},j_{1}}$ is regular.}
Let $y=[i_{1}\cdots i_{s}|j_{1}\cdots j_{s}]$.  Applying Lemma~\ref{rules} shows that
  $(R/J_{\Delta})_{y}\cong (R/(J_{1}+J_{2}))_{y}$, where $J_{i}$ is generated by
  \begin{enumerate}
  \item the minors $[a_{1}\dots a_{t}|b_{1}\dots b_{t}]$ with $\{b_{1},\dots,b_{t}\}\in\Delta_{i}$ that satisfy % either
    $j_{\ell}\notin\{b_{1},\dots,b_{t}\}$ % or $i_{\ell}\notin\{a_{1},\dots,a_{t}\}$
    for each~$\ell$,
  \item the minors $[a_{1}\dots a_{t'}|b_{1}\dots b_{t'}]$ whenever there exist
%    $a_{t'+1},\dots,a_{t}\in\{i_{1},\dots,i_{s}\}$ and
    $b_{t'+1},\dots,b_{t}\in\{j_{1},\dots,j_{s}\}$ with\\ $\{b_{1},\dots,b_{t}\}\in\Delta_{i}$.
  \end{enumerate}
  Thus, $J_{1}$ and $J_{2}$ are defined in disjoint sets of variables.  Since $K$ is algebraically closed, $J_{1}+J_{2}$ is a prime splitting.
  Therefore, $R_{y}J_{\Delta} = R_{y}J_{\Delta_{1}} + R_{y}J_{\Delta_{2}}$ is also a prime splitting.  Note that for any
  ideal $I$, localization with respect to~$y$ induces a bijection of those associated primes of $I$ that do not
  contain~$y$ and the associated primes of~$R_{y}I$.  The statement now follows since $y$ is regular modulo
  $J_{\Delta}$, $J_{\Delta_{1}}$ and~$J_{\Delta_{2}}$.
\end{proof}

%%%%%%%%%%%%%%%%

Now we can prove Theorem~\ref{thm:block-adj-prime-split}. % and~\ref{general}.
\begin{proof}[Proof of Theorem~\ref{thm:block-adj-prime-split}.] %  and~\ref{general}
%  By Lemma~\ref{lem:field-lemma}, we may assume that $K$ is algebraically closed.
  We first assume that $K$ is algebraically closed.  If $u_i> v_{i-1}$ for some~$i$, then
  $\Delta$ is disconnected, and $J_{\Delta}$ can be written as a sum of two ideals defined in
  disjoint sets of variables.  Therefore, we may assume that $u_i\leq v_{i-1}$ for all~$i$.

  We do induction on~$r$.  The base case $r=1$ is Theorem~\ref{prime adjacent}.  Assume that $r>1$.  Note that vertex $v_1$
  does not take position $m-1$ in any hyperedge of~$\Delta$, and vertex $v_1-k\geq u_2$ does not take position $m-1-k$
  in any hyperedge of~$\Delta$.
  Therefore, the variables $y_1=x_{m-1,v_{1}}$, $y_2=x_{m-2,v_{1}-1}$, \dots, $y_{s}=x_{m-s,u_{2}}$, do not appear in the support of the generators of $\ini_<(P_\Gamma)$, 
  which implies that the minor {$[m-s\cdots m-1|u_{2}\cdots v_{1}]$ is} % $y_1,\dots,y_{s}$ are
  regular modulo~$P_\Gamma$.  
%The result now follows from
  By Lemma~\ref{lem:prime-splitting},  $(J_{\Delta_{1}}+\dots+J_{\Delta_{r-1}})+J_{\Delta_{r}}$ is a prime splitting.  By induction, 
  $J_{\Delta_{1}}+\dots+J_{\Delta_{r-1}}$ is also a prime splitting.  So finally
  $J_{\Delta_{1}}+\dots+J_{\Delta_{r}}$ is also a prime splitting.

  If $K$ is not algebraically closed, we argue as follows: Let $L$ be the algebraic closure of~$K$.  Then, using the
  notation of Lemma~\ref{lem:field-lemma}, Theorem~\ref{thm:block-adj-prime-split} holds for $(J_{\Delta})_{L}$, and the
  primary decomposition of $(J_{\Delta})_{L}$ follows from Theorem~\ref{general}.  Since the primary components of
  $(J_{\Delta})_{L}$ are all defined over~$K$, the primary decomposition of $J$ has the same structure.  It follows that
  $J$ is a prime splitting.
\end{proof}

{The next example illustrates the idea of the last proof.} 

\begin{Example}
 % \red{Need to check and adapt.}
  Let $m=4$ and $\Delta=\Delta_1\cup \Delta_2\cup\Delta_3$ for the block adjacent hypergraphs
  $\Delta_1=\Delta_{(4),12345}\cup\{3456\}$, $\Delta_2=\{5678\}$ and $\Delta_3=\{\{7,8,9,10\}\}$. The variables $x_{25}$ and $x_{36}$ do not appear in any initial term (with respect to the lexicographic order) and the minor $[23|56]=x_{25}x_{36}-x_{26}x_{35}$ is regular  modula $J_\Delta$. Also, the variables $x_{27}$ and $x_{38}$ do not appear in any initial term and also the minor $[23|78]=x_{27}x_{38}-x_{28}x_{37}$ is regular modulo $J_\Delta$. Therefore, $J_\Delta=J_{\Delta_1}+J_{\Delta_2}+J_{\Delta_3}$ is a prime splitting. More precisely, 
the prime sequences of $\Delta$ are
$$\Gamma = \{ \Gamma_{1}:[1,5],[3,6],\ \Gamma_{2}:[5,8], \ \Gamma_{3}:[7,10] \}\quad \text{ and}\quad \Gamma' = \{\Gamma_{1}:[1,6], \  \Gamma_{2}:[5,8], \ \Gamma_{3}:[7,10]\}.$$
  \end{Example}

\section{Further applications of the prime splitting lemma}
\label{sec:further-examples}

In Theorem~\ref{thm:glue-along-cliques} we have seen an example of how hypergraphs can be glued along cliques
to construct larger hypergraphs which still share some of the properties of the smaller hypergraphs.  In this
section, we look at other instances of prime splittings.  Our main tool is the prime splitting lemma (Lemma~\ref{lem:prime-splitting}).
The following theorem is a simple reformulation of this lemma.
\begin{Theorem}
  \label{thm:lemma-as-thm}
  Suppose that $\Delta$ is a closed hypergraph that can be written as a union
  $\Delta_1\cup\Delta_2\cup\cdots\cup\Delta_r$ of closed sub-hypergraphs. %, where each $\Delta_{i}$ is block adjacent.
  For $k=1,\dots,r$ suppose that $V(\Delta_{1}\cup\dots\cup\Delta_{k-1})\cap
  V(\Delta_{k})=\{j_{k,1},\dots,j_{k,r_{k}}\}$, and suppose that there exist pairwise different indices
  $i_{k,1},\dots,i_{k,r_{k}}$ such that $x_{i_{k,\ell},j_{k,\ell}}$ does not appear in any initial term of the
  generators of~$J_{\Delta}$.  Then $J_{\Delta}=J_{\Delta_{1}}+\dots+J_{\Delta_{r}}$ is a prime splitting.
\end{Theorem}
\begin{proof}
  The statement follows inductively from Lemma~\ref{lem:prime-splitting}.  Note that the assumptions imply that
  % $x_{i_{k,1},j_{k,1}}\cdots x_{i_{k,r_{k}},j_{k,r_{k}}}$
  $[i_{k,1}\cdots i_{k,r_{k}}|j_{k,1}\cdots j_{k,r_{k}}]$
  is regular modulo all relevant ideals.
\end{proof}
\begin{Remark}
  In the context of Theorem~\ref{thm:lemma-as-thm}:
  \begin{enumerate}
  \item \color{black}
    If each $\Delta_{i}$ is itself a union of subsequent block adjacent hypergraphs, then the minimal primes can be
    described in terms of prime sequences of the~$\Delta_{i}$.
  \item
    If each $\Delta_{i}$ is a clique, then $J_{\Delta}$ is prime.
  \end{enumerate}
\end{Remark}
\begin{Example}
  \label{ex2}
  Let $m=4$ and $\Delta=\Delta_1\cup\Delta_2\cup\Delta_3$ be the $3$-dimensional hypergraph on the vertex set $[11]$, where $\Delta_1=\{1234,2345\}$, $\Delta_2=\{4678,6789\}$ and $\Delta_3=\{\{5,9,10,11\}\}$, see Figure~\ref{Fig:minimal-primes}.
  By Theorem~\ref{general}, the associated primes of $J_{\Delta_1}$ are $J_{\Delta_{(4),12345}}$ and $J_{\{234\}}$.
% $J_\Gamma$ and $P_{1}$, where $J_{\Gamma}$ is the ideal associated to the $4$-clique on the vertex set $[5]$ and
% $P_1=([123|234],[124|234],[234|234],[134|234])$.
  The prime ideals of $J_{\Delta_2}$ are $J_{\Delta_{(4),46789}}$ and~$J_{\{678\}}$.
% the ideals $J_{{\Gamma'}}$ associated to the $4$-clique on the vertex set $\{4,6,7,8,9\}$, and the ideal $P_2=([123|678],[124|678],[234|678],[134|678])$.
  One can see that $x_{2,4}$ does not appear in any of the initial terms of the defining minors of $J_{\Delta_{1}}$,
  $J_{\Delta_{2}}$ and~$J_{\Delta_{1}}+J_{\Delta_{2}}$, which form $<_{\lex}$-Gr\"obner basis of the corresponding
  ideals.  Thus, $x_{2,4}$ is regular modulo $J_{\Delta_{1}}$, $J_{\Delta_{2}}$ and~$J_{\Delta_{1}}+J_{\Delta_{2}}$, and
  $J_{\Delta_{1}}+J_{\Delta_{2}}$ is a prime splitting.  Similarly, {$[13|59]=x_{15}x_{39}-x_{19}x_{35}$ is a regular element}
  % $x_{3,5}$ and $x_{1,9}$ are regular elements
  modula $J_{\Delta_{1}}+J_{\Delta_{2}}$, $J_{\Delta_{3}}$ and~$J_{\Delta}$, and so $(J_{\Delta_{1}}+J_{\Delta_{2}}) +
  J_{\Delta_{3}}$ is a prime splitting.  In total, $J_{\Delta}=J_{\Delta_{1}}+J_{\Delta_{2}}+J_{\Delta_{3}}$ is a prime
  splitting.  The minimal prime ideals of $J_\Delta$ are
  \begin{gather*}
    % J_{{\Gamma}}+J_{{\Gamma'}}+J_{\Delta_3}, \qquad
    % J_{\Gamma}+P_2+J_{\Delta_3},  \qquad
    % P_1+J_{{\Gamma'}}+J_{\Delta_3}, \qquad
    % P_1+P_2+J_{\Delta_3}.
    J_{\Delta_{(4),12345}} + J_{\Delta_{(4),46789}} + J_{\Delta_{3}}, \qquad
    J_{\Delta_{(4),12345}} + J_{\{678\}} + J_{\Delta_{3}}, \\
    J_{\{234\}} + J_{\Delta_{(4),46789}} + J_{\Delta_{3}}, \qquad
    J_{\{234\}} + J_{\{678\}} + J_{\Delta_{3}}.
\end{gather*}
\end{Example}

\begin{figure}[ht]
  \begin{center}
    \begin{tikzpicture} [scale = .58, very thick = 10mm]
% 3-cliques
% 2345   % arctan(3) = 71.5650512 degree
         % arctan(2) = 63.4349488 degree
      \filldraw[clique1] (1.7786406,3.3359217) arc[start angle=251.56505,end angle=90,radius=.7]
             -- (4,4.7) arc[start angle=90,end angle=63.43,radius=.7]
             -- (8.3130495,2.626099) arc[start angle=63.43,end angle=-108.43495,radius=.7]
             -- cycle;
% 1234   % arctan(6) = 80.5376778 degree
         % arctan(2) = 63.4349488 degree
      \filldraw[clique2] (-0.15811389,3.506803) arc[start angle=260.53768,end angle=90,radius=.5]
             -- (4,4.5) arc[start angle=90,end angle=63.43,radius=.5]
             -- (6.2236068,3.4472136) arc[start angle=63.43,end angle=-99.462322,radius=.5]
             -- cycle;
% 4678   % like 1234 flipped
      \filldraw[clique3] (-0.15811389,2.493197) arc[start angle=-260.53768,end angle=-90,radius=.5]
             -- (4,1.5) arc[start angle=-90,end angle=-63.43,radius=.5]
             -- (6.2236068,2.5527864) arc[start angle=-63.43,end angle=+99.462322,radius=.5]
             -- cycle;
% 6789   % like 1234 shifted
      \filldraw[clique4] (-0.15811389,1.506803) arc[start angle=260.53768,end angle=90,radius=.5]
             -- (4,2.5) arc[start angle=90,end angle=63.43,radius=.5]
             -- (6.2236068,1.4472136) arc[start angle=63.43,end angle=-99.462322,radius=.5]
             -- cycle;
% 591011 % like 6789 flipped
      \filldraw[clique5] (12.158114,1.506803) arc[start angle=-80.53768,end angle=90,radius=.5]
             -- (8,2.5) arc[start angle=90,end angle=116.57,radius=.5]
             -- (5.7763932,1.4472136) arc[start angle=116.57,end angle=279.46232,radius=.5]
             -- cycle;
% nodes
      \node (n1) at (0,4)  [Cgray] {$1$};
      \node (n2) at (2,4)  [Cgray] {$2$};
      \node (n3) at (4,4)  [Cgray] {$3$};
      \node (n4) at (6,3)  [Cgray] {$4$};
      \node (n5) at (8,2)  [Cgray] {$5$};
      \node (n6) at (0,2)  [Cgray] {$6$};
      \node (n7) at (2,2)  [Cgray] {$7$};
      \node (n8) at (4,2)  [Cgray] {$8$};
      \node (n9) at (6,1)  [Cgray] {$9$};
      \node (n10) at (10,2) [Cgray2] {$10$};
      \node (n11) at (12,2) [Cgray2] {$11$};
    \end{tikzpicture}
    \caption{A $3$-dimensional hypergraph.}\label{Fig:minimal-primes}
  \end{center}
\end{figure}
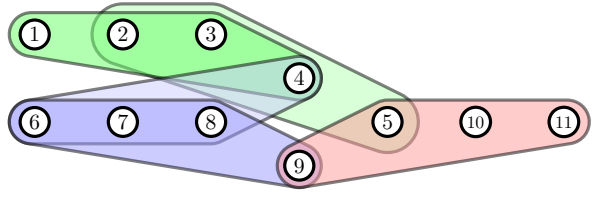
\vspace{-.5cm}
\begin{Example}
  Let $m=3$, and let $\Delta=\Delta_1\cup\Delta_2\cup\Delta_3\cup \Delta_4$, where $\Delta_1$ is the adjacent hypergraph
  on the vertex set $\{1,2,3,4\}$, $\Delta_2$ is the adjacent hypergraph on $\{4,5,6,7\}$, $\Delta_3$ is the adjacent
  % $2$-dimensional
  hypergraph on $\{3,7,8,9\}$ and $\Delta_4 = \Delta_{(3),\{9,10,11,12\}}$, % is the $3$-clique on the vertex set $\{9,10,11,12\}$,
see Figure~\ref{fig:cactus}.
It is straightforward to check that $\Delta$ is closed.
As we know $J_{\Delta_4}$ is a prime ideal, and by Theorem~\ref{prime adjacent},
the minimal primary decompositions of $J_{\Delta_1}$, $J_{\Delta_2}$ and $J_{\Delta_3}$ are
\begin{itemize}
%\item $J_{\Delta_1}=([12|23],[13|23],[23|23])\cap([123],[124],[134],[234])=P_1\cap P_2$,
\item $J_{\Delta_1}=J_{\Delta_{1,1}}\cap J_{\Delta_{1,2}}$, where $\Delta_{1,1}=\{23\}$ and $\Delta_{1,1}=\{1234\}$.
%\item $J_{\Delta_2}=([12|56],[13|56],[23|56])\cap([456],[457],[467],[567])=Q_1\cap Q_2$,
\item $J_{\Delta_2}=J_{\Delta_{2,1}}\cap J_{\Delta_{2,2}}$, where $\Delta_{2,1}=\{56\}$ and $\Delta_{2,1}=\{4567\}$.
%\item $J_{\Delta_3}=([12|78],[13|78],[23|78])\cap([378],[379],[389],[789])=L_1\cap L_2$.
\item $J_{\Delta_3}=J_{\Delta_{3,1}}\cap J_{\Delta_{3,2}}$, where $\Delta_{2,1}=\{78\}$ and $\Delta_{2,1}=\{3789\}$.
\end{itemize}
Then the minimal primes of $J_\Delta$ are % \[J_{\Delta_4}+P_i+Q_j+L_k\quad\text{ for }\ i=1,2,\ j=1,2,\ k=1,2.\]
\begin{equation*}
  J_{\Delta_4}+J_{\Delta_{1,i}}+J_{\Delta_{2,j}}+J_{\Delta_{3,k}}\quad \text{ for }\ i=1,2,\ j=1,2,\ k=1,2.
\end{equation*}
\end{Example}

\begin{figure}[ht]
  \begin{center}
    \begin{tikzpicture} [scale = .70, very thick = 10mm] % .58
% 3-cliques
      \fill[clique] (-11,0) -- (-12,-1) -- (-11,-2) -- (-10,-1) -- (-11,0) -- (-11,1) -- (-9,1) -- (-7,0) -- (-6,1) -- (-5,0) -- (-6,-1) -- (-7,0) -- (-8,-1) -- (-10,-1) -- (-9,1);
% nodes
      \node (n1) at (-11,-2)  [Cgray] {$1$};
      \node (n2) at (-12,-1)  [Cgray] {$2$};
      \node (n3) at (-10,-1)  [Cgray] {$3$};
      \node (n4) at (-11,0)  [Cgray] {$4$};
      \node (n5) at (-11,1)  [Cgray] {$5$};
      \node (n6) at (-10,0.5)  [Cgray] {$6$};
      \node (n7) at (-9,1)  [Cgray] {$7$};
      \node (n8) at (-8,-1)  [Cgray] {$8$};
      \node (n9) at (-7,0)  [Cgray] {$9$};
      \node (n10) at (-6,1)  [Cgray2] {$10$};
      \node (n11) at (-6,-1)  [Cgray2] {$11$};
      \node (n12) at (-5,0)  [Cgray2] {$12$};
% 2-cliques
      \foreach \from/\to in {n1/n2,n1/n3,n2/n3,n2/n4,n3/n4, n4/n5,n4/n6,n5/n6,n5/n7,n6/n7, n3/n7,n3/n8,n7/n8,n7/n9,n8/n9, n9/n10,n9/n11,n9/n12,n10/n11,n10/n12,n11/n12}
        \draw[] (\from) -- (\to);
   \end{tikzpicture}
  \end{center}
  \caption{A block adjacent hypergraph. % (a) and its graph $G_{\Delta}$, which is a cactus graph (b).
  }
  \label{fig:cactus}
\end{figure}
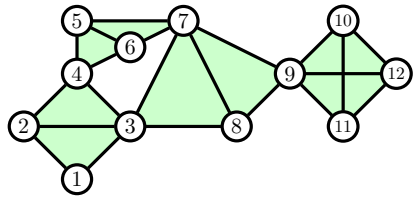

\begin{Example}
\label{ex3}
Assume that $\Delta=\Delta_1\cup\Delta_2\cup\Delta_3$ is the $3$-dimensional hypergraph on $[9]$, where $\Delta_1=\{1234\}$, $\Delta_2=\Delta_{(3),2567}$, % is the $3$-clique on the vertex set $\{2,5,6,7\}$, % with $\mathcal{F}(\Delta_2)=\{256,267,257,567\}$,
and $\Delta_3=\{489\}$, see Figure~\ref{Fig:tree-of-cliques}.
The hypergraph $\Delta$ satisfies the conditions of Theorem~\ref{thm:lemma-as-thm}. %~\ref{thm:prime-union-of-cliques}.
  Hence, $J_\Delta$ is a prime ideal.
\end{Example}

\begin{figure}[ht]
  \begin{center}
    \begin{tikzpicture} [scale = .58, very thick = 10mm]
% 3-cliques
% 1234   % arctan(3) = 71.5650512 degree
         % arctan(2) = 63.4349488 degree
      \filldraw[clique2] (3.7763932,2.4472136) arc[start angle=116.56505,end angle=63.434949,radius=.5]
             -- (6.2236068,1.4472136) arc[start angle=63.434949,end angle=-63.434949,radius=.5]
             -- (4.2236068,-0.4472136) arc[start angle=296.56505,end angle=243.43495,radius=.5]
             -- (1.7763932,0.5527864) arc[start angle=243.43495,end angle=116.56505,radius=.5]
             -- cycle;
% 489
      \filldraw[clique4] (-0.5,2) arc[start angle=180,end angle=63.434949,radius=.5]
             -- (2.2236068,1.4472136) arc[start angle=63.434949,end angle=-63.434949,radius=.5]
             -- (0.2236068,-0.4472136) arc[start angle=296.56505,end angle=180,radius=.5]
             -- cycle;
% 567
      \filldraw[clique5] (7.3,2) arc[start angle=180,end angle=63.434949,radius=.7]
             -- (10.3130495,1.626099) arc[start angle=63.434949,end angle=-63.434949,radius=.7]
             -- (8.3130495,-0.62609903) arc[start angle=296.56505,end angle=180,radius=.7]
             -- cycle;
% 256  ( 567 flipped )
      \filldraw[clique5] (8.7,2) arc[start angle=0,end angle=116.56505,radius=.7]
             -- (5.6869505,1.626099) arc[start angle=116.56505,end angle=243.43495,radius=.7]
             -- (7.6869505,-0.62609903) arc[start angle=-116.56505,end angle=0,radius=.7]
             -- cycle;
% 257
      \draw[semitransparent] (7.7763932,2.4472136) arc[start angle=116.56505,end angle=63.434949,radius=.5]
             -- (10.2236068,1.4472136) arc[start angle=63.434949,end angle=-90,radius=.5]
             -- (6,0.5) arc[start angle=270,end angle=116.56505,radius=.5]
             -- cycle;
% 267
      \draw[semitransparent] (7.7316718,-0.53665632) arc[start angle=-116.56505,end angle=-63.434949,radius=.6]
             -- (10.2683281,0.46334368) arc[start angle=-63.434949,end angle=90,radius=.6]
             -- (6,1.6) arc[start angle=-270,end angle=-116.56505,radius=.6]
             -- cycle;
% nodes
      \node (n1) at (4,2)  [Cgray] {$1$};
      \node (n3) at (6,1)  [Cgray] {$3$};
      \node (n2) at (4,0)  [Cgray] {$2$};
      \node (n4) at (2,1)  [Cgray] {$4$};
      \node (n5) at (8,2)  [Cgray] {$5$};
      \node (n6) at (8,0)  [Cgray] {$6$};
      \node (n7) at (10,1) [Cgray] {$7$};
      \node (n8) at (0,2)  [Cgray] {$8$};
      \node (n9) at (0,0)  [Cgray] {$9$};
%   \node at (5,-1)  [Cwhite] {$(a)$};
% % connectivity graph
%   \node (n1234) at (15,1)  [Cgray] {$1234$};
%   \node (n2567) at (17,1)  [Cgray] {$2567$};
%   \node (n489) at (13,1)  [Cgray] {$489$};  
%   \node at (15,-1)  [Cwhite] {$(b)$};
%   \foreach \from/\to in {n1234/n2567,n1234/n489}
%     \draw[] (\from) -- (\to);
    \end{tikzpicture}
    \caption{A $3$-dimensional hypergraph. % (b) The corresponding graph $G_{\Delta}$.
    }\label{Fig:tree-of-cliques}
  \end{center}
\end{figure}
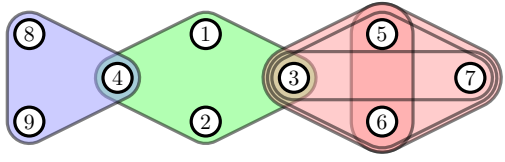

{One may ask how tight is Theorem~\ref{thm:lemma-as-thm}.  Are all conditions in the statement necessary?  The example of the block adjacent hypergraphs shows that if cliques have a large overlap, then there is no prime splitting among them.  Also, the ideal of a union of subsequent block adjacent hypergraphs is prime if and only if all assumptions of Theorem~\ref{thm:lemma-as-thm} are satisfied.}
{That the closedness assumption is necessary can be seen already from the case of binomial edge ideals~\cite{Rauh13,HHH}.
  For illustrative purposes, we present a higher-dimensional example.}
\begin{Example}
  \label{ex33}
  Assume that $\Delta=\Delta_1\cup\Delta_2\cup\Delta_3\cup\Delta_4$ is the $2$-dimensional hypergraph on $[6]$, where
  $\Delta_1=\{123\}$, $\Delta_2=\{345\},\ \Delta_{3}=\{146\}$, and $\Delta_4=\{256\}$, see Figure~\ref{fig:beautiful}.
% The intersection condition of Theorem~\ref{thm:prime-union-of-cliques} holds for $\Delta$.
  However, one may check with {Macaulay2}~\cite{M2} that $J_\Delta$ is not a prime ideal (the computation is not straight-forward, though, and we are grateful to Michael Stillman for helping us with this example).
% Thus the closedness assumption is necessary in Theorem~\ref{thm:prime-union-of-cliques}.
\end{Example}

\begin{figure}[ht]
  \begin{center}
    \begin{tikzpicture} [scale = .70, very thick = 10mm] % .58
% 3-cliques
      \fill[clique] (-1,0) -- (1,0) -- (0,1.5)
                    (0,1.5) -- (-3,2) -- (3,2)
                    (-1,0) -- (-3,2) -- (0,-2)
                    (1,0) -- (3,2) -- (0,-2)
                    ;
% nodes
      \node (n1) at (-1,0)    [Cgray] {$1$};
      \node (n2) at ( 1,0)    [Cgray] {$2$};
      \node (n3) at ( 0,1.5)  [Cgray] {$3$};
      \node (n4) at (-3,2)    [Cgray] {$4$};
      \node (n5) at ( 3,2)    [Cgray] {$5$};
      \node (n6) at (0,-2)  [Cgray] {$6$};
% 2-cliques
      \draw (n1) edge (n2) edge (n3) edge (n4) edge (n6)
            (n2) edge (n3) edge (n5) edge (n6)
            (n3) edge (n4) edge (n5)
            (n4) edge (n5) edge (n6)
            (n5) edge (n6);
      % \foreach \from/\to in {n1/n2,n1/n3,n2/n3,n2/n4,n3/n4, n4/n5,n4/n6,n5/n6,n5/n7,n6/n7, n3/n7,n3/n8,n7/n8,n7/n9,n8/n9, n9/n10,n9/n11,n9/n12,n10/n11,n10/n12,n11/n12}
      %   \draw[] (\from) -- (\to);
%%%%%%%%%%%
% % nodes
%   \node (n1234) at (0,-1)  [Cgray] {$1234$};
%   \node (n4567) at (1,1)  [Cgray] {$4567$};
%   \node (n3789) at (3,0)  [Cgray] {$3789$};
%   \node (n912) at (5,0)  [Cgray,align=center] {$9,10$,\\$11,12$};
%   \node (m) at (-7,-2.5)  [Cwhite] {$(a)$};
%   \node (mp) at (3,-2.5)  [Cwhite] {$(b)$};
% % 2-cliques
%   \foreach \from/\to in {n1234/n4567,n1234/n3789,n4567/n3789,n3789/n912}
%     \draw[] (\from) -- (\to);
    \end{tikzpicture}
  \end{center}
  \caption{A hypergraph that is not closed. % (a) and its graph $G_{\Delta}$, which is a cactus graph (b).
  }
  \label{fig:beautiful}
\end{figure}
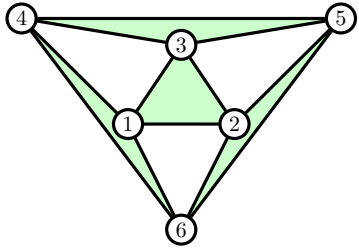

\section{Minimal free resolution of determinantal ideals}
\label{sec:res}

In this part we study the minimal free resolution of the determinantal ideal for a closed hypergraph $\Delta$ with clique decomposition $\Delta=\Delta_1\cup\cdots\cup \Delta_r$. We construct the minimal free resolution of $J_\Delta$ as a tensor product of the minimal free resolution of the determinantal ideals of its cliques. 
%The minimal free resolution of the determinantal ideal $I$ generated by all maximal minors of the generic $m\times n$-matrix $X$,
%is given by the Eagon-Northcott complex. 
Here we state the following known result from~\cite{Eisenbud}.

\begin{Proposition}
\label{clique}
% Let $X=(x_{ij})$ be an $m\times n$-matrix and $I_{(m)}$ be
Recall that $I_{(m)}$ denotes the ideal generated by all maximal minors of~$X$. The minimal free resolution of $R/I_{(m)}$ is given by the Eagon-Northcott complex
\begin{multline*}
  \dots \xrightarrow{\partial_{k+1}} D_{k}(S^{m})\otimes\wedge^{k+m}(S^{n})
        \xrightarrow{\partial_{k}} D_{k-1}(S^m)\otimes \wedge^{k+m-1}(S^n)
        \xrightarrow{\partial_{k-1}} \dots \\
  \dots \xrightarrow{\partial_{2}} D_{1}(S^{m})\otimes \wedge^{m+1}(S^{n}) \xrightarrow{\partial_{1}} 
  \wedge^{m}(S^{n}) \xrightarrow{\partial_{0}} R \xrightarrow{\partial_{0}} R/I_{(m)},
\end{multline*}
where the first map $\partial_{1}$ consists of the elements of $G(I_{(m)})$, i.e., the generating set of $I_{(m)}$.
% , where the $k$th module in the resolution is $D_k(S^m)\otimes \wedge^{k+m}(S^n)$ and the first map of the complex
% consists of the elements of $G(J_\Delta)$, i.e., the generating set of $J_\Delta$, and whose later maps are of the
% form
% \[
% \partial_k:\ D_k(S^m)\otimes
% \wedge^{k+m}(S^n)\rightarrow
% D_{k-1}(S^m)\otimes
% \wedge^{k+m-1}(S^n)
% \]
Here $D_k$ is the divided power algebra, and matrices are chosen with respect to natural basis elements $e_{i_1}\ldots e_{i_k}\otimes g_{j_1}\wedge g_{j_2}\wedge \cdots\wedge g_{j_{k+m}}$ for $i_1\leq i_2\leq\cdots\leq i_k$ and $j_1<\cdots<j_{k+m}$, where $e_1,\ldots,e_m$ are denoted for  basis elements of rows of $X$, and $g_1,\ldots,g_n$ for  basis elements of columns of $X$.
\end{Proposition}

\begin{Remark}\label{eagon}
%Assume that $I_{(m)}$ is the ideal generated by maximal minors of $X$. 
The Eagon-Northcott complex associated to $R/I_{(m)}$ is a minimal \emph{linear} free resolution of $R/I_{(m)}$, and the Betti numbers are given by
\[
\beta_{i}(I_{(m)})=\beta_{i}(\ini_<I_{(m)})=\binom{n}{m+i}\binom{m+i-1}{i}\quad {\rm for\ all}\ i,
\]  for any term order $<$, (see~\cite{BZ} for details).
\end{Remark}

The tensor product $(A\otimes B,\partial)$ of two chain complexes $(A,d_1)$ and $(B,d_2)$ is defined by $(A \otimes B)_k = \bigoplus_{i+j=k} A_i\otimes B_j$ and $\partial(a\otimes b) = d_1a \otimes b + (-1)^i a \otimes d_2b$, when $a\in A_i$. Then $\partial^2 = 0$, and $\partial$ induces a natural map $\partial:H(A)\otimes H(B) \rightarrow H(A\otimes B)$ such that $\partial(a\otimes b)=a\otimes b$. If $a=d_1c$ is a boundary and $b$ is a cycle, then $a \otimes b = \partial(c\otimes b)$ is again a  boundary which shows that
$\partial$ is well-defined.
Let $R=K[x_1,\ldots,x_n]$ and $a = (a_1,\ldots,a_n)$ be a vector, where all $a_\ell$'s are positive integers. Then the \emph{$a$-grading} is the graded
structure induced by $a$ on $R$ which considers $a_\ell$ as the degree of $x_\ell$ for all $\ell$.
The \emph{$a$-degree} of the monomial $m=x_1^{d_1}\cdots x_{n}^{d_n}$ is $a_1d_1+\cdots+a_nd_n$, and the {$a$-degree} of a polynomial $f=\sum_{i=1}^r\lambda_i m_i$ denoted by $a(f)$, is the largest $a$-degree of a monomial in $f$. Then $\ini_a(f):=\lambda_{j_1}m_{j_1}+\cdots+\lambda_{j_k}m_{j_k}$, where $a(f)=a(m_{j_1})=\cdots=a(m_{j_k})$, and $a(f)>a(m_i)$ for $m_i\neq m_{j_\ell}$.

The following lemma is a consequence of the K\"unneth formula for the exactness of the tensor product of two exact complexes of $K$-vector spaces. To make the paper self-contained we include a short proof here.
\begin{Lemma}\label{initial}
Let $I=I_1+I_2+\cdots+I_r$ for ideals $I_\ell\subset K[X_\ell]$, where $X_\ell\subset\{x_1,\ldots,x_n\}$ and $X_\ell\cap X_{\ell'}=\emptyset$ for all $\ell<\ell'$. Assume that $\mathcal{F}_{i}$ is the minimal free resolution of $R/I_i$ for each $i$.
Then the minimal free resolution of $R/I$ is obtained by $\mathcal{F}_{1}\otimes \mathcal{F}_{2}\otimes \cdots\otimes \mathcal{F}_{r}$.
\end{Lemma}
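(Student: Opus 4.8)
The plan is to combine the observation that the homology of a tensor product of free resolutions computes $\Tor$ (the K\"unneth-type remark recalled just above) with a $\Tor$-vanishing statement coming from the disjointness of the variable sets, and then to note that a tensor product of minimal complexes is again minimal.

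First I would reduce everything to the case $r=2$. Suppose we have established the following claim $(\dagger)$: \emph{if $I'\subset k[Y']$ and $I''\subset k[Y'']$ are ideals with $Y'\cap Y''=\emptyset$, and $\mathcal F'$, $\mathcal F''$ are the minimal free resolutions of $R/I'$, $R/I''$, then $\mathcal F'\otimes_R\mathcal F''$ is the minimal free resolution of $R/(I'+I'')$.} Then the lemma follows by induction on $r$, the case $r=1$ being trivial: for $r\geq 2$, set $J=I_2+\cdots+I_r\subset k[X_2\cup\cdots\cup X_r]$; by the induction hypothesis $\mathcal F_2\otimes_R\cdots\otimes_R\mathcal F_r$ is the minimal free resolution of $R/J$, the variable set $X_2\cup\cdots\cup X_r$ is disjoint from $X_1$, and the tensor product of complexes is associative, so applying $(\dagger)$ to $I_1$ and $J$ shows that $\mathcal F_1\otimes_R\mathcal F_2\otimes_R\cdots\otimes_R\mathcal F_r$ is the minimal free resolution of $R/(I_1+J)=R/I$.

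To prove $(\dagger)$, let $\mathcal G'$ and $\mathcal G''$ be the minimal graded free resolutions of $k[Y']/I'$ over $k[Y']$ and of $k[Y'']/I''$ over $k[Y'']$, so that $\mathcal F'\cong\mathcal G'\otimes_{k[Y']}R$ and $\mathcal F''\cong\mathcal G''\otimes_{k[Y'']}R$, and let $Y_0$ be the set of variables of $R$ lying outside $Y'\cup Y''$. Then $R/I''\cong k[Y']\otimes_k\bigl(k[Y_0]\otimes_k(k[Y'']/I'')\bigr)$ is free, hence flat, as a module over $k[Y']$; therefore the complex $\mathcal F'\otimes_R R/I''\cong\mathcal G'\otimes_{k[Y']}R/I''$ is exact in positive homological degrees, with $H_0=(k[Y']/I')\otimes_{k[Y']}R/I''=R/(I'+I'')$. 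This says exactly that $\Tor^R_j(R/I',R/I'')=0$ for $j>0$ and $\Tor^R_0(R/I',R/I'')=R/(I'+I'')$. Since $H_j(\mathcal F'\otimes_R\mathcal F'')=\Tor^R_j(R/I',R/I'')$, the complex $\mathcal F'\otimes_R\mathcal F''$ is a free resolution of $R/(I'+I'')$.

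It remains to check minimality. Because $\mathcal G'$ and $\mathcal G''$ are minimal, every entry of every differential matrix of $\mathcal F'$ lies in $(Y')R\subseteq\mm$, and every entry of every differential matrix of $\mathcal F''$ lies in $(Y'')R\subseteq\mm$, where $\mm$ denotes the graded maximal ideal of $R$. The differential of $\mathcal F'\otimes_R\mathcal F''$ sends a basis element $e\otimes f$ with $e\in\mathcal F'_i$ to $d'e\otimes f+(-1)^{i}e\otimes d''f$, so in the induced bases its matrix entries are, up to sign, entries of $d'$ or of $d''$ and hence lie in $\mm$. Thus $\mathcal F'\otimes_R\mathcal F''$ is minimal, which proves $(\dagger)$ and with it the lemma. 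The one point that needs genuine care is the $\Tor$-vanishing $\Tor^R_j(R/I',R/I'')=0$ for $j>0$, i.e. the homological independence of the two quotients coming from the disjointness of their variable sets; once this is in hand, the remaining steps are formal.
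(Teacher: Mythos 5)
Your proof is correct and follows essentially the same route as the paper's: induction on $r$, the vanishing $\Tor_i^R(R/I',R/I'')=0$ for $i>0$ coming from the disjointness of the variable sets, and minimality of the tensor complex inherited from minimality of the factors. You supply more detail than the paper does — in particular the flatness argument justifying the $\Tor$-vanishing, which the paper merely asserts — and your argument does not actually use that the ideals are monomial, but the underlying strategy is identical.
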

\begin{proof}
The proof is by induction on $r$. Assume that $r>1$. Since differential maps of the tensor complex are defined in terms of differential maps of $\mathcal{F}_\ell$'s, the minimality of the tensor complex follows by the minimality of the resolutions of all components.
On the other hand, these ideals live in rings with disjoint variables which implies that $\Tor_i(R/(I_1+\cdots+I_{r-1}),R/I_r)=0$ for $i>0$, and so the constructed complex is indeed a minimal free resolution for $R/I$.
\end{proof}

\begin{Theorem}\label{res}
Let $\Delta=\Delta_1\union\Delta_2\union\cdots\union \Delta_r$ be the clique decomposition of a closed hypergraph $\Delta$. Assume that $\mathcal{F}_{i}$ is the minimal free resolution of $R/J_{\Delta_i}$ given by the Eagon-Northcott complex for each $i$.
Then the minimal free resolution of $R/J_\Delta$ is obtained by $\mathcal{F}_{1}\otimes \mathcal{F}_{2}\otimes \cdots\otimes \mathcal{F}_{r}$.
\end{Theorem}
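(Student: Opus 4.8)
The plan is to build the minimal free resolution of $R/J_\Delta$ from the Eagon--Northcott resolutions $\mathcal F_i$ of the classical determinantal rings $R/J_{\Delta_i}$, by first analysing the initial ideal and then transporting the resolution back across the Gr\"obner degeneration; I would run the whole argument by induction on $r$, the case $r=1$ being Proposition~\ref{clique}.

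\textbf{Step 1: the initial ideal.} By Theorem~\ref{closed} the union $\bigcup_i G(J_{\Delta_i})$ is a Gr\"obner basis of $J_\Delta$ with respect to $<_{\lex}$, so $\ini_<(J_\Delta)=\ini_<(J_{\Delta_1})+\cdots+\ini_<(J_{\Delta_r})$, and by Remark~\ref{relatively prime} these are monomial ideals supported on pairwise disjoint sets of variables. Hence Lemma~\ref{initial} applies and identifies the minimal free resolution of $R/\ini_<(J_\Delta)$ with $\mathcal G_1\otimes\cdots\otimes\mathcal G_r$, where $\mathcal G_i$ resolves $R/\ini_<(J_{\Delta_i})$. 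By Lemma~\ref{ini}, $\mathcal G_i$ has the same total Betti numbers as $\mathcal F_i$; since $\mathcal F_i$ is a \emph{linear} resolution (Remark~\ref{eagon}) and $\ini_<(J_{\Delta_i})$ is generated in the same single degree $\dim\Delta_i+1$ as $J_{\Delta_i}$, a short argument with consecutive cancellations forces $\mathcal G_i$ to be linear as well, so $\mathcal G_i$ and $\mathcal F_i$ have equal \emph{graded} Betti numbers. Consequently $\mathcal F_1\otimes\cdots\otimes\mathcal F_r$ and the genuine minimal resolution $\mathcal G_1\otimes\cdots\otimes\mathcal G_r$ of $R/\ini_<(J_\Delta)$ have the same graded free modules in every homological degree.

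\textbf{Step 2: transport to $J_\Delta$.} Put $\Delta'=\Delta_1\cup\cdots\cup\Delta_{r-1}$; this is again a union of full skeletons whose leading monomials are pairwise coprime (by closedness of $\Delta$), so the induction hypothesis -- stated for such unions, not merely for clique decompositions -- gives that its minimal free resolution is $\mathcal F_1\otimes\cdots\otimes\mathcal F_{r-1}$. It then suffices to prove
\[\Tor_i^R\bigl(R/J_{\Delta'},\,R/J_{\Delta_r}\bigr)=0\qquad\text{for }i>0,\]
for then $(\mathcal F_1\otimes\cdots\otimes\mathcal F_{r-1})\otimes_R\mathcal F_r$ computes these $\Tor$ modules and is therefore a free resolution of $R/(J_{\Delta'}+J_{\Delta_r})=R/J_\Delta$; a tensor product of minimal complexes is minimal (all differentials of the $\mathcal F_i$ have entries in $\mm$), so this tensor complex is \emph{the} minimal free resolution of $R/J_\Delta$. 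For the required vanishing, the argument of Theorem~\ref{closed} shows $\ini_<(J_{\Delta'})=\ini_<(J_{\Delta_1})+\cdots+\ini_<(J_{\Delta_{r-1}})$, which is supported on variables disjoint from those of $\ini_<(J_{\Delta_r})$; hence $\Tor_i^R(R/\ini_<(J_{\Delta'}),R/\ini_<(J_{\Delta_r}))=0$ for $i>0$, exactly as in the proof of Lemma~\ref{initial}. Choosing a weight realising $<_{\lex}$ on $J_{\Delta'}$, $J_{\Delta_r}$ and $J_\Delta$ simultaneously and homogenising, one gets a flat family over $K[t]$ with general fibre the pair $(R/J_{\Delta'},R/J_{\Delta_r})$ and special fibre $(R/\ini_<(J_{\Delta'}),R/\ini_<(J_{\Delta_r}))$; upper semicontinuity of $\dim_K\Tor_i$ along this family -- i.e. the criterion of Bruns--Conca in \cite{BC} -- then yields $\dim_K\Tor_i^R(R/J_{\Delta'},R/J_{\Delta_r})\le\dim_K\Tor_i^R(R/\ini_<(J_{\Delta'}),R/\ini_<(J_{\Delta_r}))=0$.

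The crux, and the main obstacle, is precisely this last vanishing: although the leading terms of minors coming from different cliques are coprime -- which is exactly where closedness of $\Delta$, through Remark~\ref{relatively prime}, is indispensable -- the minors themselves share variables, so $\Tor$-independence of $R/J_{\Delta'}$ and $R/J_{\Delta_r}$ is not visible over $R$ and must be imported from the transparent disjoint-variable situation of the initial ideals through the degeneration; guaranteeing that no homology and no minimal syzygy is lost in the limit is the heart of the matter. A secondary, purely bookkeeping point is that the induction should be set up for arbitrary unions of full skeletons with pairwise coprime leading monomials, so that deleting the clique $\Delta_r$ leaves a complex to which the hypothesis still applies; combined with Step 1 this also yields the equality of (multi)graded Betti numbers of $J_\Delta$ and $\ini_<(J_\Delta)$ recorded in Corollary~\ref{Betti}.
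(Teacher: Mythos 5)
Your proposal is correct and follows essentially the same route as the paper: induction on $r$, reduction to the vanishing of $\Tor_i^R(R/J_{\Delta'},R/J_{\Delta_r})$ for $i>0$, which is obtained by degenerating to the initial ideals (where the supports are disjoint by closedness) and invoking the Bruns--Conca semicontinuity of $\Tor$ along the Gr\"obner deformation, followed by the standard observation that a tensor product of minimal complexes is minimal. Your Step~1 on the resolution of the initial ideal is not needed for the theorem itself (it belongs to Corollary~\ref{Betti}), and your remark that the induction hypothesis should be phrased for arbitrary unions of cliques with pairwise coprime leading terms is a sound bookkeeping point that the paper leaves implicit.
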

\begin{proof}
Assume that $n_\ell=|V(\Delta_\ell)|$ and $t_\ell=\dim(\Delta_\ell)$ for each $\ell$.  Note that in the case $n_\ell= t_\ell+1$, the ideal $J_{\Delta_{\ell}}$ can be identified with the determinantal ideal of the $n_\ell$-clique on $t_\ell+1$ vertices.
Therefore we can get a minimal free resolution of $J_{\Delta}$ by Eagon-Northcott complex. 
%So without loss of generality we may assume that $n_\ell\geq t_\ell+1$ for all $\ell$.
The proof is by induction on $r$. Let $r>1$, $I=J_{\Delta_1}+\cdots+J_{\Delta_{r-1}}$ and $L=J_{\Delta_r}$. By induction hypothesis assume that ${F_\cdot}=\mathcal{F}_{1}\otimes \mathcal{F}_{2}\otimes \cdots\otimes \mathcal{F}_{r-1}$ is a minimal free resolution of $R/I$, and $G_\cdot$ is a minimal free resolution of $L$.
We consider a weight vector $a=(a_{11},a_{12},\ldots,a_{mn})$ in $N^{mn}$ such that
% $a_{11}>a_{12}>\cdots>a_{mn}$.
$$
{\ini_a([c_1\cdots c_t|i_1\cdots i_t])=\ini_{<_{\lex}}([c_1\cdots c_t|i_1\cdots i_t]),\ \ini_a(I)=\ini_{<_{\lex}}(I)\ {\rm and}\ \ini_a(L)=\ini_{<_{\lex}}(L).}
$$
Assume that $b=(1,1,\ldots,1)\in N^{mn}$. Then both $I$ and $L$ are $b$-homogenous. For all $u\in\ini_{<_{\lex}}(I)$ and $v\in\ini_{<_{\lex}}(L)$, we have $\supp(u)\cap\supp(v)=\emptyset$ and so the generators of the ideals $\ini_{<_{\lex}}(I)$ and $\ini_{<_{\lex}}(L)$ are relatively prime which implies $\Tor_i(R/\ini_{<_{\lex}}(I),R/\ini_{<_{\lex}}(L))=0$ for $i>0$. Now applying~\cite[Proposition~3.3]{BrunsConca} we conclude that
$\Tor_i(R/I,R/L)=0$ for $i>0$
which implies $F_\cdot\otimes G_\cdot$ is a resolution for $R/J_\Delta$. Since differential maps are defined in terms of differential maps in $F_\cdot$ and $G_\cdot$, we have $\partial_i(F_\cdot\otimes G_\cdot)_i\subset \mm(F_\cdot\otimes G_\cdot)_{i-1}$ which is equivalent to the minimality of the resolution.
\end{proof}

\begin{Example}
The ideal % $I=([234],[134],[124],[123],[456],[567])$
of the hypergraph in Figure~\ref{Fig:subs-block-adjacent:B}
is the sum of three maximal determinantal ideals.  The resolution of each component is given by the Eagon-Northcott complex as follows:
\begin{eqnarray*}
0
 \longrightarrow R(-4)^{3}
 \xrightarrow{
A=\left(\begin{array}{ccc}
  z_{1} &  - y_{1} & x_{1} \\
   - z_{2} & y_{2} &  - x_{2} \\
  z_{3} &  - y_{3} & x_{3} \\
   - z_{4} & y_{4} &  - x_{4} \end{array}\right) } R(-3)^{4}
 &\xrightarrow{([234]\ \ [134]\ \ [124]\ \ [123])} R
&\\
0
 \longrightarrow R(-3)
 &\xrightarrow{\ \quad\quad\quad([456])\quad\quad\quad} R
&\\
0
 \longrightarrow R(-3)
 &\xrightarrow{\ \quad\quad\quad([567])\quad\quad\quad} R
\end{eqnarray*}
The tensor complex of these resolutions is:
\[
0
 \longrightarrow R(-10)^{3}
 \xrightarrow{d_3} R(-7)^{6} \oplus R(-9)^{4}
 \xrightarrow{d_2} R(-4)^{3} \oplus R(-6)^{9}
\xrightarrow{d_1} R(-3)^{6}
  \xrightarrow{([567]\ [456] \ \cdots \ [123]) } R
\]
For example, basis elements of the last module $R(-10)^{3}$ in the
resolution are:
\[
[1123|1234],\
[1223|1234],\
[1233|1234],
\]
where by $[ijk\ell|1234]$ we mean the determinant of the submatrix with row indices $i,j,k,\ell$ (not necessarily distinct) and column indices $1,2,3,4$.
The differential map acts on the basis element $[1233|1234]\otimes[456]\otimes[567]$ as:
\begin{eqnarray*}
d_3([1233|1234]\otimes[456]\otimes[567])&=&\partial_{1,1}([1233|1234])+(-1)^2 \partial_{2,0}([456])+(-1)^3\partial_{3,0}([567])
\\&=&
(z_1[234]-z_2[134]+z_3[124]-z_4[123])\otimes[456]\otimes [567]\\&+&[456]([1233|1234]\otimes1\otimes[567])\\&-&[567]([1233|1234]\otimes[456]\otimes1),
\end{eqnarray*}
where $\partial_{i,j}$ is the $j^{\rm th}$ differential map in the resolution of the $i^{\rm th}$ ideal.
\end{Example}

Remark~\ref{eagon} and Lemma~\ref{initial} together with Theorem~\ref{res} imply the following result.
\begin{Corollary}\label{Betti}
Let $\Delta$ be a closed hypergraph. Then
\begin{itemize} 
\item[(a)] $\beta_{i,j}(\ini_<(J_{\Delta}))=\beta_{i,j}(J_{\Delta})$ for all $i,j.$

\item[(b)] $J_\Delta$ has a minimal {\rm linear} resolution if and only if $\Delta$ is a clique. %, i.e., a full simplex.
\end{itemize}
\end{Corollary}

{We remark that the arguments leading to Corollary~\ref{Betti} are similar to those used in the proof
of~\cite[Proposition~3.2]{EneHerzogHibi11:CM_binomial_edge_ideals}.  Our Corollary extends ~\cite[Proposition~3.2]{EneHerzogHibi11:CM_binomial_edge_ideals} from graphs to hypergraphs. }
%Our Corollary implies that the assumption in
%\cite[Proposition~3.2]{EneHerzogHibi11:CM_binomial_edge_ideals} that the ideal is Cohen-Macaulay is not needed.

\begin{Remark}
Determinantal ideals associated to closed hypergraphs are examples of ideals with ``nice'' initial ideals in the sense of~\cite{ConcaHostenThomas, Adam, FatemehFarbod}, since the Betti numbers of these ideals are equal to the Betti numbers of their initial ideals. In other words, Corollary~\ref{Betti} gives a positive answer to~\cite[Question~1.1]{ConcaHostenThomas}.
\end{Remark}

\section{Further questions}

% \medskip
% \noindent\textbf{Non-closed hypergraphs.}
So far our analysis is restricted to the case of closed hypergraphs, where a Gr\"obner basis is known.  In this case, we
can find variables that are regular and use the localization lemma.  Thus, to generalize our results to
non-closed hypergraphs, it is necessary to either understand Gr\"obner bases for more general hypergraphs, or to have
methods to find regular elements without knowing a Gr\"obner basis.

Another direction in which to generalize the results is as follows: given a pair of % pure $(k-1)$-dimensional
hypergraphs $(\Delta_{1},\Delta_{2})$, % on the vertex set $[n]$,
we can associate a determinantal ideal
\[
J_{\Delta_1,\Delta_2}=([a_1\ldots a_k| b_1\ldots b_k]\:\; \{a_1,\ldots,a_k\}\in \Delta_1 \text{ and } \{b_1,\ldots,b_k\}\in \Delta_2).
\]
% which is called the {\em determinantal hypergraph ideal} associated to the pair $(\Delta_1,\Delta_2)$.
In our computations using the software Singular~\cite{GPS} we observed that $J_{\Delta_1,\Delta_2}$ is a radical ideal only in the case that either $\Delta_1$ or $\Delta_2$ is a clique.  We are interested to see how algebraic properties of $J_{\Delta_1}$ and $J_{\Delta_2}$ influence algebraic properties of $J_{\Delta_1,\Delta_2}$.
The case in which both $\Delta_1$ and $\Delta_2$ are graphs was studied in~\cite{Ene}.

% \textcolor{green}{I would drop the remainder.}
% Is there any technique to express the minimal primes of  $J_{\Delta_1,\Delta_2}$ in terms of the minimal primes of determinantal hypergraph ideals of $J_{\Delta_1}$ and $J_{\Delta_2}$?
% Our examples show that there exist ideals $J_{\Delta_1,\Delta_2}$ with nice decompositions into smaller ideals $J_{\Gamma_1},\ldots,J_{\Gamma_t}$ such that the minimal primes of $J_{\Delta_1,\Delta_2}$  can be determined from the minimal primes of $J_{\Gamma_i}$.
% It is interesting to determine classes of ideals with this property. Another approach could be to get information on associated primes of these ideals.

% \medskip
% \noindent\textbf{Ideals with universal Gr\"obner basis.}
% A natural question to ask is whether the same result holds for non-pure hypergraphs.
% We expect that the generators of a mixed determinantal ideal of $J_\Delta$ form a
% universal Gr\"obner basis for $J_\Delta$ if and only if $\Delta$ is a disjoint union of cliques.

% \textcolor{green}{Does this question make sense?  The cited result says that the generators of $I_{(k)}$ are a universal Gr\"obner basis only if $k=\min\{m,n\}$.  So the question only makes sense for pure $(m-1)$-dimensional hypergraphs.}

% \textcolor{green}{I would suggest to remove this question here, and make a remark in Section~2.2.}

\bigskip
\textbf{Acknowledgements.}
We would like to thank H${\rm \acute{e}}$l${\rm \grave{e}}$ne Barcelo, J\"urgen Herzog and Volkmar Welker for many suggestions and helpful discussions. We are grateful to Michael Stillman for helping us with Example~\ref{ex33}.
The first author acknowledges support
from the Mathematical Sciences Research Institute (MSRI), and the
Alexander von Humboldt Foundation during this project.

\bibliography{Det2015}
\bibliographystyle{IEEEtranS}

\end{document}